\newcommand{\beq}{\begin{eqnarray*}}
\newcommand{\feq}{\end{eqnarray*}}
\newcommand{\beqn}{\begin{eqnarray}}
\newcommand{\feqn}{\end{eqnarray}}
\newtheorem{theorem}{Theorem}[section]
\newtheorem{lemma}[theorem]{Lemma}
\theoremstyle{definition}
\theoremstyle{remark}
\newtheorem{remark}[theorem]{Remark}
\numberwithin{equation}{section}
\newtheorem*{theorem*}{Theorem}
\begin{document}
\title[Critical thresholds in Euler-Poisson systems]{Critical thresholds in one dimensional damped Euler-Poisson systems}

\author{Manas Bhatnagar and Hailiang Liu}
\address{Department of Mathematics, Iowa State University, Ames, Iowa 50010}
\email{manasb@iastate.edu}
\email{hliu@iastate.edu} 
\keywords{Euler-Poisson system, critical threshold, global regularity, shock formation}
\subjclass{Primary, 35L65; Secondary, 35L67} 

\begin{abstract} This paper is concerned with the critical threshold phenomenon for one dimensional damped, pressureless Euler-Poisson equations with electric force induced by a constant background, originally studied in [S. Engelberg and H. Liu and E. Tadmor, Indiana Univ. Math. J., 50:109--157, 2001]. A simple transformation is used to linearize the characteristic system of equations, which allows us to study the geometrical structure of critical threshold curves for three damping cases: overdamped, underdamped and borderline damped through phase plane analysis.  We also derive the explicit form of these critical curves. 
These sharp results state that if the initial data is within the threshold region, the solution will remain smooth for all time, otherwise it will have a finite time breakdown. Finally, we apply these general results to identify critical thresholds for a non-local system subjected to initial data on the whole line.
\end{abstract}

\maketitle

\section{Introduction} 
\label{prob} 
It is well known that  the finite-time breakdown of the systems of Euler equations for compressible flows is generic  in the sense that finite-time shock formation occurs for all but a ``small" set of initial data.  For pairs of conservation laws, Lax \cite{La64} showed that the $C^1$-smoothness of solutions can be lost  unless its two Riemann invariants are nondecreasing.  On the other hand, with the Poisson forcing the  system of Euler--Poisson equations  admits a ``large" set of initial configurations which yield global smooth solutions, see, e.g. \cite{ELT01, LT02a,  LT03}.  
The Euler-Poisson system of equations is used to model various phenomena,  ranging from plasma physics to applications in semiconductors.
Physically, we would desire to know as to whether the concerned particles aggregate or the smooth density profile exists forever.  Indeed, for a class of Euler--Poisson equations, the question addressed in \cite{ELT01}  is whether there is a critical threshold for the initial data such that the persistence of the $C^1$ solution regularity depends only on crossing such a critical threshold.   For example, for system of Euler--Poisson equations with only electric force,
\begin{align*}
& \rho_t +(\rho u)_x=0,\\
&  (\rho u)_t +(\rho u^2)_x=-k \rho \phi_x,\\
& -\phi_{xx}=\rho -c,
\end{align*}
subject to initial data $(\rho_0>0,  u_0(x))$, \cite{ELT01} has shown that for repulsive force $k>0$ it admits a global smooth solution if and only if 
$$
u_{0x}(x) > -\sqrt{2k \rho_0(x)} \quad \text{if} \quad c=0,
$$
and 
$$
|u_{0x}(x)| < \sqrt{k(2\rho_0(x)-c)} \quad \text{if} \quad c>0.
$$
These two critical thresholds indicate that with the background charge, the solutions of the above system will be oscillatory, hence initial slope cannot be too big either. 
The zero background case when augmented with the usual $\gamma$-law pressure, is shown by Tadmor and Wei \cite{TW08} to still admit global solutions for a large class of initial data identified by an intrinsic critical threshold. The non-zero background case with  pressure is different.  Using special energy techniques with proper normal form of transformations, the authors in \cite{GHZ17} have shown that smooth solutions with small amplitude persist forever with no shock formation in the case of cubic law of pressure. 

In this paper we revisit the one dimensional pressureless, damped  Euler--Poisson system with potential induced by a constant background, \begin{subequations}
\begin{align}
\label{EPMain}
\begin{aligned}
&\rho_t + (\rho u)_x = 0, \\
&u_t + uu_x = -k\phi_x -\nu u, \\
&-\phi_{xx} = \rho -c,\\
\end{aligned}
\end{align}
subject to initial conditions,
\begin{align}
\label{EPMain2}
\begin{aligned}
&\rho (x,0) = \rho_0 (x) > 0,\qquad \rho_0 \in C^1 (\mathbb{R}),\\
&u(x,0) = u_0 (x),\qquad u_0 \in C^1 (\mathbb{R}),\\
\end{aligned}
\end{align}
\end{subequations}
where $c>0$ is the constant background, $\nu >0$ is the damping coefficient,  and parameter $k$ signifies the property of the underlying forcing, repulsive if $k>0$ and attractive if $k<0$. We consider only repulsive force between particles and hence, $k>0$. Here, we also need the neutrality condition
\[
\int_{-\infty}^\infty (\rho_0(\xi)-c)\, d\xi = 0,
\]
which is conserved for all time if $\rho u$ vanishes at far fields. Therefore, we have a fixed background charge density of $c$ and an equal amount of movable charge, $\rho(x, t)$.

The main objective of our revisit  to this problem is to introduce alternative tools, instead of the use of flow map techniques in \cite{ELT01}, to identify the critical thresholds for (\ref{EPMain}).  We hope these tools can be useful for the study of critical threshold phenomena in other problems of similar nature. 
More precisely, we want to get an explicit characterization of the critical threshold curve as a function of initial density and velocity slope for three different cases:
\begin{enumerate}
\item
$\nu >2\sqrt{kc} $,  strong damping,
\item
$\nu < 2\sqrt{kc}$,  weak damping,  and
\item
$\nu = 2\sqrt{kc}$,  borderline damping.
\end{enumerate}
We are able to recover the results in \cite{ELT01} for weak damping case, and obtain a sharp critical threshold for strong damping case, for which only a sufficient condition was identified in \cite{ELT01}, plus a sharp critical threshold for the borderline case.

We present two methods in analysis, each gives the critical threshold curve for all three cases but by different techniques. The initial step in both methods is to transform a non-linear system of equations into a linear system and then analyze the obtained system. The first method is more rudimentary and involves explicit solution techniques of linear differential equations with constant coefficients. The second method involves vector field analysis. 

On the solution behavior of Euler--Poisson equations  there is a considerable amount of literature available. Consult  \cite{En96, WC98} for nonexistence results and singularity formation;   \cite{CW96, WW06} for global existence of weak solutions with geometrical symmetry; \cite{Gu98, GMP13, GP11} for global existence for 3-D irrotational flow, \cite{MN95} for isentropic case, and \cite{PRV95} for isothermal case. Smooth irrotational solutions for the two dimensional Euler--Poisson system are constructed independently in \cite{IP13, LW14}. See also \cite{Ja12, JLZ14} for related results on two dimensional case. The question of critical thresholds in multi-D Euler-Poisson systems remains largely open; we refer to \cite{LT03} for sharp conditions on global regularity vs finite time breakdown for the 2-D restricted Euler--Poisson system, and \cite{LT02a} for sufficient conditions on finite time breakdown for the general n-dimensional restricted Euler--Poisson systems. A relative complete analysis of critical thresholds in 3-D restricted Euler--Poisson systems is given in \cite{Lee2} for both attractive and repulsive forcing.   

As a direct benefit of  our present results, we illustrate how to  apply them to an interesting system in the context of biological aggregation:
\begin{subequations}\label{cc}
\begin{align}
& \rho_t + (\rho u)_x = 0, \quad (x,t)\in \mathbb{R}\times (0,\infty), \\
& u_t + uu_x + u = -\partial W\star\rho, 
\end{align}
\end{subequations} 
where 
\[W(x) = -|x|+\frac{|x|^2}{2},
\]
subject to initial conditions
\begin{align}
\label{ccid}
\begin{aligned}
& \rho (x,0) = \rho_0 (x) > 0,\qquad \rho_0 \in C^1(\mathbb{R}),\\
& u(x,0) = u_0 (x),\qquad u_0 \in C^1(\mathbb{R}).
\end{aligned}
\end{align}
Instead of electric force governed by the Poisson equation, here non-local interactions between particles are modeled by Newtonian attractive forces.  System (\ref{cc}) has been formally derived from interacting particle systems in collective dynamics; see e.g. \cite{CDMBC07}, \cite{CDP09}, 
and kinetic equations for collective behavior can be derived rigorously from particle systems via the mean-field limit, see \cite{CFTV10, CCR11}, 
and the references therein.

When initial data is compactly supported, belonging to space $(H^2(U),H^3(U))$, where $U\subset\mathbb{R}$ has compact support, the critical thresholds for this problem have been established in \cite{CCZ} by flow map techniques. 
{We should point out that in \cite[Remark 3.1]{CCZ} there is an explanation of what additional assumptions need to be made for the initial data so that the result still holds for $U=\mathbb{R}$}.  However, the analysis of the local existence result for classical solutions in \cite[Appendix A]{CCZ} does not seem to be applicable directly to the setting when initial data is defined on the whole line. Hence, we present a new local existence theorem of classical solutions and give a self-contained proof using a different approach, in which some control of solution behavior at far fields is essential.  In addition, the $C^1$ class of initial data we consider is larger than the $H^2\times H^3$ class for $(\rho_0, u_0)$.  {With this local existence theory, our results obtained for (\ref{EPMain}) when applied to (\ref{cc}) lead  to Theorem 5.2 and Theorem 5.3. To our best knowledge, the geometrical structure of the critical threshold curves for (\ref{cc}) given in Theorem 5.3 is new. The explicit thresholds in Theorem 5.2 are essentially the same as those in \cite{CCZ}, although here the initial data is defined on the whole line.  
} 

A related model is the one-dimensional Euler--alignment system which has a non-local velocity alignment force (such force becomes the linear damping when the alignment force is localized), for such model thresholds for global regularity vs finite time breakdown were analyzed in \cite{TT14}.  Such result  was further improved in \cite{CCTT16} by closing the gap between lower and upper thresholds. When both linear damping and nonlocal interaction forces are present, sharp critical thresholds were obtained in \cite{CCZ} for a special system (\ref{cc}) with smooth, compactly supported initial data.

The rest of this paper is organized as follows. In Section 2, we state the main results and introduce the key transformation as a preparation for the analysis carried out in Sections 3 and 4.  In Section 3, we prove our main results, providing sharp critical thresholds for initial configurations which yield either global smooth solution or finite time breakdown. In Section 4, we give dynamic representation of the critical threshold curve in each case.  Finally, in Section 5 we apply our obtained results  to identify the critical thresholds for (\ref{cc}). The proof of the needed local wellposedness result is deferred into Appendix A.  

\section{Preliminaries and main results}

The threshold analysis to be carried out is the a priori estimate on smooth solutions as long as they exist.  For the one-dimensional
Euler-Poisson problem, local existence of smooth solutions was long known, it can be justified by using the characteristic method in the pressureless case.  We only state the result here.

\begin{theorem}
\label{local}
$($\textbf{Local existence}$)$ If $\rho_0 \in C^1$ and  $u_0 \in C^1$ , then there exists $T>0$, depending on the initial data, such that the initial value problem (\ref{EPMain}),  (\ref{EPMain2}) admits a unique solution $(\rho, u) \in C^{1} ([0,T)\times \mathbb{R}).$  Moreover,  if the maximum life span $T^* < \infty$, then
$$
\lim_{t \rightarrow T^*-} \partial_x u(t, x^*) =-\infty
$$
for some $x^* \in \mathbb{R}$.
\end{theorem}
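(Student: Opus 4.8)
The plan is to prove Theorem~\ref{local} by the method of characteristics, exploiting the fact that the pressureless system~\eqref{EPMain} is diagonalizable along a single family of characteristics. First I would introduce the characteristic curves $\dot X(t;\alpha) = u(X(t;\alpha),t)$ with $X(0;\alpha)=\alpha$, and define the usual variables along them: the density $\rho$, the slope $d := u_x$, and the field $E := -\phi_x$, so that $E_x = \rho - c$. Differentiating the momentum equation in $x$ and using the continuity equation one gets a closed ODE system along characteristics,
\begin{align*}
&\dot\rho = -\rho d,\\
&\dot d = -d^2 + kE - \nu d,\\
&\dot E = -\rho u + (\text{something}),
\end{align*}
where the $E$-equation requires care: since $E_x=\rho-c$ is only a spatial constraint, one tracks instead the quantity $e := \rho - c - $ (a derivative of the momentum flux) or, more cleanly, uses the conserved flux $q := \rho u$ with $q_x = -\rho_t = (\rho d + \ldots)$ — I would follow the standard device of showing $E$ evolves along characteristics via $\dot E = -\rho u$ modulo the neutrality normalization, giving a genuinely closed $3\times 3$ (in fact $2\times 2$ after the transformation advertised in the paper) autonomous-in-coefficients ODE. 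The point is that this ODE system has a smooth (locally Lipschitz) right-hand side in $(\rho, d, E)$, so Picard--Lindelöf gives a unique $C^1$ solution on a maximal interval $[0,T^*)$ for each $\alpha$, with continuous dependence on $\alpha$.

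Next I would assemble the local solution of the PDE from the ODE data. Because the initial data are $C^1$, standard ODE theory gives that $X(t;\alpha)$, $\rho$, $d$, $E$ are jointly $C^1$ in $(t,\alpha)$ on a common time interval $[0,T)$ with $T>0$ uniform for $\alpha$ ranging over any bounded set; one then checks the characteristics do not cross on $[0,T)$ by showing $\partial_\alpha X > 0$ there (its evolution is $\frac{d}{dt}\partial_\alpha X = d\cdot \partial_\alpha X$, an integrating-factor argument keeps it positive as long as $d$ stays finite), so $\alpha \mapsto X(t;\alpha)$ is a $C^1$ diffeomorphism of $\mathbb{R}$ and we may invert to define $\rho(x,t)$, $u(x,t)=\dot X$, $\phi$ from the Poisson equation. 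Verifying that the reconstructed $(\rho,u,\phi)$ solves~\eqref{EPMain} in the classical sense is then a direct chain-rule computation, and uniqueness follows because any $C^1$ solution must have its characteristics satisfy the same ODE system with the same data. For the far-field/global-in-$\alpha$ issue one needs a mild a priori bound ensuring $T$ does not shrink to $0$ as $|\alpha|\to\infty$; this uses that $\rho_0, u_0 \in C^1$ together with the neutrality condition to control $E$ uniformly — this is the one place the whole-line setting needs a small argument rather than a citation.

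For the breakdown criterion, suppose $T^* < \infty$ is the maximal existence time. The only obstruction to continuing the ODE/characteristic solution is blow-up of one of $\rho, d, E$. Along each characteristic, $E$ is bounded on finite time intervals (it satisfies a linear-growth ODE once $\rho$ is controlled, or directly $|E|$ is bounded via the transformed variables), and $\rho = \rho_0 \exp(-\int_0^t d\,ds)$, so $\rho$ blows up (to $+\infty$) precisely when $\int_0^t d\,ds \to -\infty$, which forces $d \to -\infty$ along that characteristic; conversely if $d$ stays bounded below (hence $\rho$ bounded above) and the characteristics stay separated, the solution extends past $T^*$, a contradiction. Moreover $d$ cannot blow up to $+\infty$ in finite time: from $\dot d = -d^2 + kE - \nu d$ with $E$ bounded, a Riccati comparison shows $d$ is bounded above on $[0,T^*)$, so the only possible singularity is $d \to -\infty$. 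Hence $\lim_{t\to T^*-} \partial_x u(t,x^*) = \lim d(t;\alpha^*) = -\infty$ for the characteristic $\alpha^*$ realizing the infimum, with $x^* = X(T^*;\alpha^*)$.

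The main obstacle I expect is the whole-line bookkeeping: getting a positive existence time $T$ that is uniform in the characteristic label $\alpha$, and correctly handling the field $E$ (which is defined by a nonlocal/elliptic relation, not pointwise along a characteristic) so that the ODE system is genuinely closed. Everything else — Picard--Lindelöf, non-crossing of characteristics via $\partial_\alpha X>0$, the Riccati bound ruling out $+\infty$ blow-up, and the chain-rule verification that the reconstruction solves the PDE — is routine once the closed characteristic ODE system and its domain of validity are pinned down. Since the paper elects only to state this theorem (``We only state the result here''), I would keep the exposition brief, cite~\cite{ELT01} for the classical construction, and emphasize only the $C^1$-data and whole-line points that differ from the standard treatment.
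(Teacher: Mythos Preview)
The paper does not actually prove Theorem~\ref{local}; it only states it, remarking that local existence ``can be justified by using the characteristic method in the pressureless case.'' Your proposed route via characteristics is exactly this standard method, and in fact mirrors closely what the paper carries out in detail for the analogous local existence result (Theorem~\ref{l2}) in Appendix~A: derive a closed ODE system along characteristics, get a uniform-in-$\alpha$ lower bound on $\partial_\alpha X$, invert the flow map, and rule out $d\to+\infty$ by a Riccati comparison.

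One correction worth flagging: your equation for $d$ along characteristics is miswritten. Differentiating the momentum equation in $x$ gives $\dot d = -d^2 - \nu d - k\phi_{xx} = -d^2 - \nu d + k(\rho - c)$, not $-d^2 + kE - \nu d$; the forcing involves $E_x = \rho - c$, not $E$ itself. Consequently the $(\rho,d)$ pair already forms a closed $2\times 2$ autonomous ODE (this is precisely the paper's system~\eqref{nonlin}), and the paragraph you spend worrying about how to close an $E$-equation is unnecessary for that part. You do still need $E$ to reconstruct $u$: along characteristics $E' = E_t + uE_x = -\rho u + u(\rho-c) = -cu$ (using the neutrality condition to fix the integration constant in $E_t = -\rho u$), so $(u,E)$ obey the decoupled linear system $u' = kE - \nu u$, $E' = -cu$. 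With that fix your outline is sound and aligned with both the paper's one-line justification and its Appendix~A template.
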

We proceed to derive the characteristic system which is essentially used to  in our critical threshold analysis. Differentiate the second equation in (\ref{EPMain}) with respect to $x$,  and set $u_x := d$ to obtain:
	\begin{subequations}\label{nonlin}
	\begin{align}
	& \rho' + \rho d = 0,  \\
	& d' + d^2 + \nu d = k(\rho -c ),
	\end{align}
	\end{subequations}
	where we have used the Poisson equation in (\ref{EPMain})  for $\phi$, and 
	\[
	\{\}' = \frac{\partial}{\partial t} + u\frac{\partial}{\partial x},
	\]
	 denotes the differentiation along the particle path,
	$$
	\Gamma=\{(x, t)|\;  x'(t)=u(x(t), t), x(0)=\alpha \in \mathbb{R}\}.
	$$
Here, we employ the method of characteristics to convert the PDE system (\ref{EPMain}) to ODE system (\ref{nonlin}) along the particle path which is fixed for a fixed value of the parameter $\alpha$. Consequently, the initial conditions to the above equations are $\rho (0)=\rho_0 (\alpha)$ and $d(0) = d_0 (\alpha) = u_{0x}(\alpha )$ for each $\alpha\in\mathbb{R}$. 
	
Through analysis of system  (\ref{nonlin}) we find the following.		
\begin{theorem}
\label{th1}
For the given 1D Euler Poisson system (\ref{EPMain}) with initial data (\ref{EPMain2}), there is finite time breakdown iff $\exists x \in\mathbb{R}$ such that
\begin{enumerate}
\item
($\nu >2\sqrt{kc}$) Strong damping\\
\begin{align*}
&
\max\{ u_0'(x),\ u_0' (x ) + \lambda_2(c-\rho_0(x))\}<0, \; \text{and} \\
& 
\left| \frac{c\lambda_2 u_0'(x)+k(c-\rho_0(x))}{k\rho_0(x)} \right|^{\lambda_1} \leq \left| \frac{c\lambda_1 u_0'(x) + k(c-\rho_0(x))}{k\rho_0(x)} \right|^{\lambda_2},
\end{align*}
where $\lambda_1=\frac{\nu-\sqrt{\nu^2-4kc}}{2c}$ and $\lambda_2=\frac{\nu+\sqrt{\nu^2-4kc}}{2c}$.

\item
($\nu =2\sqrt{kc}$) Borderline damping
\begin{align*}
& \max\{ u_0'(x),  u_0'(x) + \frac{\nu}{2c} (c-\rho_0(x) \} <0, \; \text{and} \\
& \ln\left(-\frac{ 2cu_0'(x) + \nu (c-\rho_0(x) ) }{\nu\rho_0(x)}\right) \geq \frac{2cu_0'(x)}{2cu_0'(x) + \nu (c-\rho_0(x))}.
\end{align*}

\item
($\nu <2\sqrt{kc}$) Weak damping
\[\left( u_0'(x) + \frac{\nu (c-\rho_0(x) )}{2c}\right)^2\geq \mu^2\left[\frac{\rho_0^2(x)}{c^2}\left( \frac{kce^{\nu t^*}}{\mu^2} - 1 \right)+\frac{(2\rho_0(x) -c)}{c}\right],\]
where $\mu=\sqrt{kc-\nu^2 /4}$ and
\[t^* = \frac{1}{\mu}\left[\beta + tan^{-1}\left( \frac{2\mu u_0'(x)}{\nu u_0'(x) + 2 k(c-\rho_0(x))}\right)\right], \]
\[\beta =\left\{ \begin{array}{lr}
0 & \nu u_0'(x)<min\{ 0,2 k(\rho_0(x) -c)  \}, \\
\pi &  2 k(\rho_0(x) -c)<\nu u_0'(x),\\
2\pi &  0<\nu u_0'(x) <2 k(\rho_0(x) -c).
\end{array}\right.\]

\end{enumerate}
\end{theorem}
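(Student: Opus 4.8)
The plan is to carry out all the analysis on the characteristic ODE system $(\ref{nonlin})$. By Theorem \ref{local}, finite time breakdown for $(\ref{EPMain})$--$(\ref{EPMain2})$ is equivalent to the existence of some $\alpha\in\mathbb{R}$ such that, along the particle path through $\alpha$, the solution of $(\ref{nonlin})$ with data $\rho(0)=\rho_0(\alpha)>0$, $d(0)=d_0(\alpha)=u_0'(\alpha)$ satisfies $d(s)\to-\infty$ at some finite time $s$. Hence it suffices to decide, for each fixed pair $(\rho_0(\alpha),d_0(\alpha))$, whether this blow-up occurs, and then to match the resulting condition with the three items in the statement.

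The key step is a linearizing change of variables. While the solution is smooth we have $\rho>0$, so we may set $w(s):=\exp\!\big(\int_0^s d(\tau)\,d\tau\big)$, i.e. $d=w'/w$ with $w(0)=1$ and $w'(0)=d_0(\alpha)$. The density equation gives $(\rho w)'=0$, so $\rho(s)w(s)\equiv\rho_0(\alpha)$, and since $d'+d^2=w''/w$, the Riccati equation for $d$ becomes the linear, constant-coefficient, inhomogeneous ODE
\begin{equation*}
w''+\nu w'+kc\,w=k\rho_0(\alpha),\qquad w(0)=1,\quad w'(0)=d_0(\alpha),
\end{equation*}
whose unique equilibrium is $w\equiv\rho_0(\alpha)/c>0$. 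Extending $w$ to a global solution of this ODE, one checks that $(\rho,d)$ stays $C^1$ exactly as long as $w>0$, that $\rho\equiv\rho_0(\alpha)/w$, and that $d=w'/w\to-\infty$ when $w\to0^+$ at a finite time. Since $w$ is smooth, globally defined and $w(0)=1>0$, the conclusion is: breakdown along the path through $\alpha$ occurs if and only if $\min_{s\ge0}w(s)\le0$.

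It then remains to solve this scalar linear ODE explicitly and compute $\min_{s\ge0}w$ in each regime, using that the roots of $r^2+\nu r+kc=0$ are two distinct negative numbers $-c\lambda_2<-c\lambda_1<0$ (strong damping; here $\lambda_1+\lambda_2=\nu/c$, $\lambda_1\lambda_2=k/c$), the double root $-\nu/2$ (borderline), or the conjugate pair $-\nu/2\pm i\mu$ with $\mu=\sqrt{kc-\nu^2/4}$ (weak damping). In each case the two free constants are determined from $w(0)=1$, $w'(0)=d_0$ and become linear in $d_0(\alpha)$ and $c-\rho_0(\alpha)$. In the strong and borderline cases $w(s)\to\rho_0(\alpha)/c>0$ and $w$ has at most one critical point $s^\ast$ on $(0,\infty)$; a sign analysis of these constants shows that $s^\ast$ exists --- and is then a strict minimum --- precisely when the first displayed condition of the corresponding item holds, while outside that region $w$ is monotone and stays positive, so no breakdown occurs. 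Eliminating $s^\ast$ by means of $w'(s^\ast)=0$ turns the condition $w(s^\ast)\le0$ into the stated power-law inequality (strong damping), respectively the stated logarithmic inequality (borderline). In the weak-damping case $w(s)=\rho_0(\alpha)/c+Re^{-\nu s/2}\cos(\mu s-\psi)$ is a damped oscillation about $\rho_0(\alpha)/c$ whose successive minima increase toward the equilibrium, so the global minimum over $s\ge0$ is attained at the first $s>0$ with $\frac{d}{ds}\big(e^{-\nu s/2}\cos(\mu s-\psi)\big)=0$ and $\cos(\mu s-\psi)<0$; using $\nu^2+4\mu^2=4kc$ this critical time is exactly the $t^\ast$ in the statement, the three cases for $\beta$ being the sign configurations of $\nu u_0'(x)$ relative to $0$ and to $2k(\rho_0(x)-c)$ that select the correct branch of the arctangent, and then the condition $w(t^\ast)\le0$, with the amplitude $R$ expressed through $d_0(\alpha)$ and $\rho_0(\alpha)$, reduces to the stated inequality.

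The main difficulty is not conceptual but the case-by-case bookkeeping: one must show that outside each threshold region $w$ remains strictly positive on all of $[0,\infty)$ (so that global regularity genuinely holds there), and one must carry out the elimination of the minimizing time --- genuinely delicate only in the oscillatory case, where tracking the phase $\psi$ through every sign combination of the data is needed to pin down $\beta$ and hence $t^\ast$. As a consistency check, setting $\nu=0$ collapses the weak-damping inequality, via $\mu^2=kc$, to the sharp threshold $|u_0'(x)|<\sqrt{k(2\rho_0(x)-c)}$ of \cite{ELT01}.
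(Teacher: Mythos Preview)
Your proposal is correct and follows essentially the same route as the paper: your linearizing variable $w=\exp\!\big(\int_0^s d\big)=\rho_0(\alpha)/\rho$ is exactly $\rho_0(\alpha)$ times the paper's $s=1/\rho$, so your ODE $w''+\nu w'+kc\,w=k\rho_0(\alpha)$ is the paper's equation $s''+\nu s'+kc\,s=k$ rescaled by the constant $\rho_0(\alpha)$, and the ensuing case-by-case computation of the first (hence global) minimum and the elimination of the minimizing time are carried out identically.
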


This reconfirms the rather remarkable phenomena investigated in \cite{ELT01}, namely that the non-zero background is able to balance the nonlinear convective effects, damping,  and the repulsive forces, to yield a global smooth solution 
if the initial data is within the threshold region.

The above result improves and extends the result stated and proved in \cite{ELT01}. 

In the next theorem we present critical thresholds in an alternative form which can be determined from the phase plane analysis. 
\begin{theorem}
\label{th2}
Consider the 1D Euler Poisson system (\ref{EPMain}) subject to $C^1$ initial data (\ref{EPMain2}). There exists a unique solution 
$\rho,u\in C^1(\mathbb{R}\times (0,\infty))$ iff $\forall x\in\mathbb{R}$, 
\begin{enumerate}
\item
($\nu >2\sqrt{kc}$) Strong damping\\
\[
(\rho_0(x ) ,u_0'(x ) )\in \{ (\rho,d): d >-\rho Q_a(1/\rho ),\, \rho>0  \},
\]
where $Q_a:[0,\infty)\longrightarrow[0,\infty)$ is a continuous function satisfying
\begin{equation}\label{qa}
\frac{dQ_a}{ds} = \nu + \frac{k}{Q_a}(1-cs), \; Q_a(0)=0.
\end{equation}

\item
($\nu =2\sqrt{kc}$) Borderline damping\\
\[(\rho_0(x ) ,u_0'(x ) )\in \{ (\rho,d): d >-\rho Q_b(1/\rho),\, \rho>0  \},\]
where $Q_b:[0,\infty)\longrightarrow[0,\infty)$ is a continuous function satisfying
\begin{equation}\label{qb}
\frac{dQ_b}{ds} = 2\sqrt{kc}+ \frac{k}{Q_b}(1-cs), \; Q_b(0)=0.
\end{equation}
\item 
($\nu <2\sqrt{kc}$) Weak damping\\
\[
(\rho_0(x ) ,u_0'(x ) )\in \{ (\rho,d): -\rho Q_1 (1/\rho) <d <-\rho Q_2(s^* - 1/\rho ),\, \rho\in (1/s^*,\infty)\}, 
\]
where $s^*>0$ is uniquely determined, and $Q_1:[0,s^*]\longrightarrow\mathbb{R}^+\cup \{ 0\}$ is a continuous function satisfying 
\[\frac{dQ_1}{ds} = \nu + \frac{k}{Q_1}(1-cs),\; Q_1(0)=0,\]
and  $Q_2: [0,s^*]\longrightarrow\mathbb{R}^-\cup\{ 0\}$ is another continuous function satisfying 
\[
\frac{dQ_2}{ds} = -\nu + \frac{k}{Q_2}(c(s^*-s)-1),\quad Q_2(0)=0.
\]
\end{enumerate}
\end{theorem}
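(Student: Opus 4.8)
The plan is to convert the global‐regularity question for (\ref{EPMain})--(\ref{EPMain2}) into the question of whether an orbit of a planar \emph{linear} system stays in a half‑plane, and to recognize the threshold curves as distinguished orbits of that system. Along each particle path the pair $(\rho,d)$, $d=u_x$, solves (\ref{nonlin}) with $\rho(0)=\rho_0(\alpha)>0$, $d(0)=u_0'(\alpha)$; by Theorem~\ref{local} the solution is globally $C^1$ iff $d$ stays finite along every characteristic for all $t\ge0$, and breakdown means $d\to-\infty$ in finite time. Now set $s=1/\rho$ and $q=-d/\rho$. A direct computation turns (\ref{nonlin}) into the linear autonomous system $s'=-q$, $q'=-\nu q-k(1-cs)$, equivalently $s''+\nu s'+kc\,s=k$, with $s(0)=1/\rho_0(\alpha)>0$, equilibrium $(1/c,0)$, and characteristic roots $\tfrac12\bigl(-\nu\pm\sqrt{\nu^2-4kc}\bigr)$; the equilibrium is globally asymptotically stable, and the sign of $\nu^2-4kc$ distinguishes the three regimes (stable node / degenerate node / stable focus). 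Since this $(s,q)$‑flow exists for all time, the original solution is $C^1$ exactly as long as $s(t)>0$: if $s>0$ for all $t\ge0$ then $s$ is bounded below by a positive constant (asymptotic stability), so $d=-q/s$ is bounded and $T^*=\infty$; if $s(T)=0$ at a finite $T$ with $s>0$ on $[0,T)$, then $s'(T)\le0$ forces $q(T)\ge0$, and a Taylor expansion at $T$ (using $q'(T)=-k$ if $q(T)=0$) gives $d=-q/s\to-\infty$. Thus, characteristic by characteristic, global regularity $\iff$ the forward orbit of $(s_0,q_0)=(1/\rho_0,-d_0/\rho_0)$ stays in $\{s>0\}$.

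The threshold curves are then identified in the phase plane. The slope of any orbit, $\dfrac{dq}{ds}=\dfrac{q'}{s'}=\nu+\dfrac{k(1-cs)}{q}$, is exactly the right‑hand side of the ODE (\ref{qa}) for $Q_a$ (and, after $\sigma=s^*-s$, of the one for $Q_2$), so $Q_a,Q_b,Q_1,Q_2$ are just arcs of the orbit through the origin $(s,q)=(0,0)$, written as graphs $q=Q(s)$; it is cleanest to \emph{define} them this way and afterwards verify the ODEs and the condition $Q(0)=0$ (near $0$, $Q(s)\sim\sqrt{2ks}$). In the strong‑ and borderline‑damping cases $(1/c,0)$ is a (possibly degenerate) stable node, and the $q\ge0$ branch of the origin‑orbit is a monotone graph $q=Q_a(s)$ (resp.\ $Q_b(s)$) over all of $[0,\infty)$; this can be read off the explicit solution $s(t)=1/c+Ae^{\lambda_1 t}+Be^{\lambda_2 t}$ of $s''+\nu s'+kcs=k$ forced through the origin, for which $q=-s'$ vanishes only at $t=0$, so $s$ is strictly monotone on each branch. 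The region $R=\{s>0,\ q<Q_a(s)\}$ is forward invariant — its boundary is the orbit‑arc (uncrossable by uniqueness) together with $\{s=0,\ q\le0\}$, where the vector field points into $\{s>0\}$ — so orbits from $R$ never leave $\{s>0\}$. Conversely, if $s_0>0$ and $q_0\ge Q_a(s_0)$, then $(s_0,q_0)\notin R$ while, by asymptotic stability, the orbit eventually enters $\mathrm{int}\,R\ni(1/c,0)$; at its first visit to $\partial R$ it would either have $s=0$ (contradicting $s>0$ throughout) or lie on the orbit‑arc, forcing it to coincide with the origin‑orbit, which reaches $s=0$ in finite forward time (contradiction). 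Hence the good set is $R$, i.e. $\{d>-\rho Q_a(1/\rho),\ \rho>0\}$, and likewise with $Q_b$.

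For weak damping $(1/c,0)$ is a stable focus; tracking the origin‑orbit backward through its first half‑turn produces its rightmost point $(s^*,0)$ — this defines $s^*\in(1/c,\infty)$ — then an upper arc $q=Q_1(s)$ on $[0,s^*]$ and, continuing past $(s^*,0)$, a lower arc which the slope computation identifies as $q=Q_2(s^*-s)$ on $[0,s^*]$. These two arcs, together with the axis segment $\{0\}\times[Q_2(s^*),0]$, bound a region $D$ containing $(1/c,0)$ that is again forward invariant, and the same dichotomy as above shows the good set equals $\mathrm{int}\,D=\{0<s<s^*,\ Q_2(s^*-s)<q<Q_1(s)\}$. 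Undoing $s=1/\rho$, $q=-d/\rho$ (so $q<Q(s)\iff d>-\rho Q(1/\rho)$) turns $R$ and $\mathrm{int}\,D$ into exactly the three threshold regions in the statement.

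The analytic content is light; the real work is the phase‑plane bookkeeping: proving the origin‑orbit branches are honest graphs on precisely the claimed domains ($[0,\infty)$ in the node cases, $[0,s^*]$ in the focus case), that $s^*$ is well defined and unique with $s^*>1/c$, and — the most delicate point — that an orbit not in the good region reaches $\{s=0\}$ in \emph{finite} (not merely asymptotic) time, which is where global asymptotic stability of $(1/c,0)$ must be combined with the non‑crossing of orbits. A secondary nuisance is that the initial value problems $Q(0)=0$ defining the $Q$'s are singular (the right‑hand side blows up as $Q\to0$); defining each $Q$ as an orbit arc and then reading off its ODE sidesteps having to establish existence/uniqueness for the singular IVP directly.
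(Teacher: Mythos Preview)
Your proposal is correct and follows essentially the same route as the paper: the change of variables $(s,q)=(1/\rho,-d/\rho)$ to linearize (\ref{nonlin}), the identification of the threshold curves as the orbit through the origin of the resulting planar linear system, and the phase‑plane invariant‑region argument split according to the spectral type of $(1/c,0)$. Your write‑up is in places more explicit than the paper's Section~4 --- notably your handling of the converse direction via asymptotic stability plus non‑crossing, and your suggestion to define the $Q$'s as orbit arcs first (sidestepping the singular IVP, which the paper treats separately in Appendix~B) --- but the underlying argument is the same.
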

The details of the proofs of Theorems \ref{th1} and \ref{th2} are carried out in Section \ref{AES} and Section \ref{CTC}, respectively. 

The main  tool in our analysis is a transformation of variables with which we can reduce the non-linear system of equations into a linear system. The solutions to the linear system can then be analyzed or analytically found.  More precisely, we introduce 	
	\begin{subequations} \label{var}
	\begin{align}
	r & = -\frac{d}{\rho},\\
	s & = \frac{1}{\rho},
	\end{align}
	\end{subequations}
so that (\ref{nonlin}) reduces to 
\begin{subequations}
\label{lin}
\begin{align}
r' & = -\nu r - k(1-cs),\\
s' & = -r. 
\end{align}
\end{subequations}
Clearly,  given any initial data, we can find the solution curves $r(t)$ and $s(t)$ which exist for all $t\in \mathbb{R}$.  In order to return to the original unknowns $(\rho, d)$, we need to make sure that $ s $ remains greater than $ 0 $ for all $ t>0 $. In this way, we avoid the finite time breakdown of $ \rho $. Consequently, we find $d$ to be bounded from below since 
\[
d = -r\rho >-\infty\ \forall t>0.
\]
{In all fairness, we should point out that actually $s=\Gamma/\rho_0$, where $\Gamma$ is nothing but the `indicator' function introduced in \cite{ELT01} to denote $\partial_\alpha  x(t; \alpha)$.  The variable $r$ has also appeared as $\beta(t)$ in \cite{ELT01} in the case $\nu=0$ and $c=0$. 
 Here with these two variables combined, we obtain the novel system (\ref{lin}) for the first time. The linearity of (\ref{lin}) and its special structure allow us to derive explicit solutions as shown in Section 3,  which is essentially the same as those preformed  in \cite{ELT01, CCZ} using the flow map techniques. 
However, the geometric structure in terms of phase plane analysis we present here has not been reported in the literature.  
We hope this approach to study the geometric structure of critical threshold curves can be extended to other systems in a  dynamic way.     
}

\section{Proof of Theorem \ref{th1}: Analyzing the explicit solution}\label{AES}
Differentiating (\ref{lin}b) and using (\ref{lin}a), we obtain the following initial value problem (IVP) for $s$,
\begin{subequations}\label{IVP}
\begin{align}
&s'' + \nu s' + kcs = k,\\
& s(0) = \frac{1}{\rho_0}, \; s'(0) = \frac{d_0}{\rho_0}.
\end{align}
\end{subequations}
The type of damping pertains to the type of solutions to this IVP. 

\subsection{Strong damping ($\nu >2\sqrt{kc}$)}
\label{SR}
On solving (\ref{IVP}), we get,
\[s(t) = \frac{1}{c}\left[ 1 + Ae^{-\lambda_1 ct} + Be^{-\lambda_2 ct}  \right],\]
where
\[A = \frac{1}{(\lambda_2 - \lambda_1)\rho_0}\left[ d_0 + \lambda_2(c-\rho_0)\right],\]
\[B = -\frac{1}{(\lambda_2 - \lambda_1)\rho_0}\left[  d_0 + \lambda_1(c-\rho_0)   \right],\]
\[\lambda_1 =\frac{\nu-\sqrt{\nu^2-4kc}}{2c},\quad 
\lambda_2 = \frac{\nu+\sqrt{\nu^2-4kc}}{2c}.
\]
Also note that if either of $ A $ or $ B $ is $ 0 $, then $s(t)>0$ for all $t>0$ is trivially achieved. Since our solution comprises negative exponentials, $ s $ decays to $ 1/c $. Also, such expressions can have at most one extremum. Therefore, on observation we conclude that if $ s' (0) \geq 0 $, then $ s $ remains positive; that is, $ d_0 \geq 0 $ ensures global existence. Next, we differentiate the expression for $ s $,
\[s'(t) =  -\left[  A\lambda_1 e^{-\lambda_1 ct} + B\lambda_2 e^{-\lambda_2 ct}  \right],\]
and equate it to $ 0 $ to obtain an expression for time $ t=t^* $ at which the extrema occurs,
\[ e^{(\lambda_2 - \lambda_1)ct^*} = -\frac{B\lambda_2}{A\lambda_1}. \]
Furthermore, we differentiate $ s' $ again to obtain,
\[ s'' (t) =  c\left[  A\lambda_1^2 e^{-\lambda_1 ct} + B\lambda_2^2 e^{-\lambda_2 ct}  \right], \]
and write it in the following way,
\[  = Ace^{-\lambda_1 ct}\lambda_1\left[ \lambda_1 + \frac{B}{A}\frac{\lambda_2^2}{\lambda_1}e^{-(\lambda_2 - \lambda_1)ct} \right], \]
and substitute the expression for $ e^{-(\lambda_2 - \lambda_1)ct^*} $ to obtain,
\[ s''(t^*) = -(\lambda_2 - \lambda_1)Ace^{-\lambda_1 ct^*}\lambda_1. \]
Hence, if $ A>0 $ then the extremum, if it exists, is a maximum. Furthermore, if $A<0$, we conclude from the expression of $t^*$ that $B>0$ becomes a necessary condition for $t^*>0$ to exist. Therefore, $A>0$ ensures global solution anyways. To obtain the critical threshold curve, we apply the condition that if there exists $ t=t^* >0 $, where the minimum of $ s $ is achieved, then $ s(t^*)>0 $ which, from (\ref{IVP}a), is equivalent to the condition that $s''(t^*)<k$. Therefore, we get
\[
-(\lambda_2 - \lambda_1)Ace^{-\lambda_1 ct^*}\lambda_1 <k.
\]
After substituting the expression for $t^*$, the above equation can be rewritten as,
\[-(\lambda_2 - \lambda_1)Ac\lambda_1 < k\left( -\frac{B\lambda_2}{A\lambda_1}  \right)^{\frac{\lambda_1}{(\lambda_2 - \lambda_1)}}.\]
Noting that $A<0$ and $B>0$, we get
\[c(\lambda_2 - \lambda_1 )(-A\lambda_1)^{\frac{\lambda_2}{\lambda_2-\lambda_1}}<
k(B\lambda_2)^{\frac{\lambda_1}{\lambda_2 -\lambda_1}}.
\]
On substituting the expressions for $A$ and $B$ and using that the difference of the exponents on either side of the inequality is one, 
we obtain 
\begin{equation}
\label{ineq}
\left[ -\frac{c\lambda_1 d_0+k(c-\rho_0)}{k\rho_0} \right]^{\lambda_2} < \left[ -\frac{c\lambda_2 d_0 + k(c-\rho_0)}{k\rho_0} \right]^{\lambda_1}.
\end{equation}
From the discussion above, we also need $d_0<0$ and  $d_0+\lambda_2 (c-\rho_0)<0$ for uniquely determining the curve as a critical threshold.  These together lead to the conclusion in (1) of Theorem \ref{th1}. 
\subsection{Weak damping ($\nu <2\sqrt{kc}$)}
\label{WR}
On solving (\ref{IVP}), we get,
\[s (t) = \frac{1}{c} +  \frac{e^{-\nu\frac{t}{2}}}{c}\left[ \left(\frac{c-\rho_0}{\rho_0}\right)\cos\mu t + \frac{1}{\mu}\left( \frac{cd_0}{\rho_0} + \frac{\nu (c-\rho_0)}{2\rho_0}\right)\sin\mu t  \right],\]
where $\mu = \sqrt{kc-0.25\nu^2}$. Once again, the idea is that if $s= (1/\rho )$ becomes $0$ at some time $t = t_c$, means $\lim_{t\rightarrow t_c^-}\rho(t) = \infty$, then there is breakdown of the solution. Since, the solutions are decaying with time, we need the first local minimum of $s$ to be greater than $0$. In view of this, we find all the times for which
\[s'(t) = 0,\quad s'' (t)>0.\]
In view of this, we calculate
\[
s' (t) = e^{-\nu\frac{t}{2}}\left[ \left( \frac{d_0}{\rho_0} \right)\cos (\mu t)     -\left( \frac{\nu d_0 + 2 k(c-\rho_0 )}{2 \mu\rho_0}  \right)\sin (\mu t)\right].
\]
So $s'(t)=0$ if 
\begin{equation}\label{tan}
\tan(\mu t) = \frac{2\mu d_0}{\nu d_0 + 2 k(c-\rho_0)}.
\end{equation}
We further perform the second derivative test,
\[s'' (t) = \frac{ce^{-\nu\frac{t}{2}}}{\rho_0}\cos (\mu t)\left[ \tan (\mu t)\left(  \frac{-4\mu^2 d_0 + \nu^2 d_0 + 2k\nu (c-\rho_0 )}{4\mu} \right) - \frac{\nu d_0 + 2k(c-\rho_0 )}{2} -\nu\frac{d_0}{2} \right],
\]
which in virtue of (\ref{tan}) gives 
\[
 s'' (t)= -\frac{ce^{-\frac{\nu t}{2}}\cos(\mu t)}{\rho_0(\nu d_0 + 2k(c-\rho_0 ))} \left[ 2 \mu^2 d_0^2 + \frac{\left( \nu d_0 + 2 k(c-\rho_0 ) \right)^2}{2}  
 \right]. 
 \] 
 The condition for a minimum to occur is $ sgn\{ \cos (\mu t )\} = -sgn\{ \nu d_0 + 2 k(c-\rho_0 )\}$. This gives us a sequence of minima at different times. 
 At the first time $t^*$, we have  
\[\mu t^*  = \beta + tan^{-1}\left( \frac{2\mu d_0}{\nu d_0 + 2 k(c-\rho_0)}\right), \]
\[\beta =\left\{ \begin{array}{lr}
0 & \nu d_0<min\{ 0,2 k(\rho_0 -c)  \}, \\
\pi &  2 k(\rho_0 -c)<\nu d_0,\\
2\pi &  0<\nu d_0 <2 k(\rho_0 -c).
\end{array}\right.\]
Note that for 
\[f(t) = c_0+ e^{-\gamma t}\left[ c_1 \cos (\theta t) + c_2\sin (\theta t)\right],\quad \gamma,\theta >0,\]
the value of the function $f$ at a local minima $\tau$ is 
\[ c_0 - e^{-\gamma \tau }\frac{\theta\sqrt{c_1^2 + c_2^2}}{\sqrt{\theta^2 + \gamma^2}}.
\]
On comparing $s$ with $f$ above we have,
\[\gamma = \frac{\nu}{2},\quad \theta = \mu,\; c_0=1/c, \quad c_1=\frac{c-\rho_0}{\rho_0},\quad c_2=\frac{1}{\mu}\left( \frac{cd_0}{\rho_0}+\frac{\nu (c-\rho_0 )}{2\rho_0}\right). \] 
Applying the above formula on $s(t)$, we have for $s(t^*)>0$ to hold, 
\[1-e^{-\frac{\nu t^*}{2}}\mu\frac{\sqrt{\left( \frac{c-\rho_0}{\rho_0} \right)^2 + \frac{1}{\mu^2}\left( \frac{cd_0}{\rho_0} + \frac{\nu (c-\rho_0) }{2\rho_0} \right)^2}}{\sqrt{kc}} >0. \]
This is equivalent to the following 
\begin{equation}
\label{wrineq}
\left( d_0 + \frac{\nu (c-\rho_0 )}{2c}\right)^2<\mu^2\left[\frac{\rho_0^2}{c^2}\left( \frac{kce^{\nu t^*}}{\mu^2} - 1 \right)+\frac{(2\rho_0 -c)}{c}\right].
\end{equation}
This proves (2) in Theorem \ref{th1}. 

\subsection{Borderline damping ($\nu = 2\sqrt{kc}$)}
On solving (\ref{IVP}), we get
\[s(t) = \frac{1}{c} + \left[ D+\left( \frac{d_0}{\rho_0}+\frac{D\nu}{2}\right) t \right]e^{-\frac{\nu t}{2}},\]
where $D = \frac{1}{\rho_0}-\frac{1}{c}.$  Setting $s'(t^*)=0$ to obtain extremum, hence from 
$$
s'(t) = \left[ \frac{d_0}{\rho_0}-\left( \frac{d_0}{2\rho_0} + \frac{D\nu}{4} \right)\nu t \right]e^{-\frac{\nu t}{2}}, 
$$
we get
\[
t^* = \frac{4d_0}{\nu\left( 2d_0 + \nu D\rho_0 \right)},
\] 
which is positive if $d_0<0$ and 
$$
2d_0 + \nu D\rho_0<0. 
$$
The latter ensures that $s''(t^* )>0$ since 
\[s'' (t) = -\nu\left[ \frac{d_0}{\rho_0} + \frac{D\nu}{4} - \left( \frac{d_0}{\rho_0} + \frac{D\nu}{2} \right)\frac{\nu t}{4} \right]e^{-\frac{\nu t}{2}}.
\]
The above $t^*$ when inserted into $s(t^*)>0$ gives us,
\[
-c\left( D + \frac{2d_0}{\nu\rho_0} \right) < e^{\frac{\nu t^*}{2}},
\]
which  is equivalent to the following 
\begin{equation}
\label{brineq}
\ln\left(-\frac{\left( 2cd_0 + \nu (c-\rho_0 ) \right) }{\nu\rho_0}\right) < \frac{2cd_0}{2cd_0 + \nu (c-\rho_0)}
\end{equation}
provided $d_0<0$ and $2cd_0 + \nu (c-\rho_0)<0$. This proves (3), hence completes the proof of Theorem \ref{th1}.

\section{Proof of Theorem \ref{th2}: Critical threshold curve}
\label{CTC}
We will look into the geometrical interpretation of the critical threshold curve for the 3 cases in this section.  First, we note  from (\ref{lin}) that $(0,1/c)$ is the only critical point in phase plane, and the vector field for the system (\ref{lin}) is shown in Figure \ref{fig1}. 

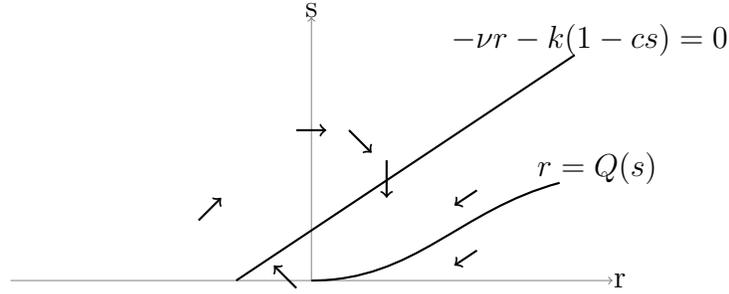
\begin{figure}[H]
\centering
\begin{tikzpicture}
  
  \draw [help lines, ->] (-4,0) --(4,0);
  \draw [help lines, ->] (0,0) -- (0,3.5);
  \node at (4.1,0) {r};
  \node at (0,3.6) {s};
  \draw[ thick] (-1,0) -- (3.5,3);
  \node at (3.7,3.2) {$-\nu r - k(1-cs)=0$};
  \draw[thick] (0,0) to [out=0,in=195] (3.3,1.3);
  \node at (3.8,1.5) {$r=Q(s)$};
  \draw[thick,->] (2.2,0.4) -- (1.9,0.2);
  \draw[thick,->] (-0.2,-0.1) -- (-0.5,0.2);  
  \draw[thick,->] (-1.5,0.8) -- (-1.2,1.1); 
  \draw[thick,->] (-0.2,2) -- (0.2,2); 
  \draw[thick,->] (0.5,2) -- (0.8,1.7); 
  \draw[thick,->] (1,1.6) -- (1,1.1); 
  \draw[thick,->] (2.2,1.2) -- (1.9,1);  
   
\end{tikzpicture}
\caption{Vector field for (\ref{lin}).}
\label{fig1}
\end{figure}
\vskip 0.2in

The key point is that in the $r-s$ plane, $s=0$ corresponds to $\rho = \infty$ by (\ref{var}b). 
We need to identify an invariant region $\Sigma$ in phase plane so that $s(t)>0$ for all $t>0$ if $(r_0, s_0)\in \Sigma$. 
Its boundary when transformed onto the $\rho -d$ plane through (\ref{var}) would give us the critical threshold curve. By observation, a trajectory curve starting at the origin and moving backwards in time would give us $\partial \Sigma$, the boundary of $\Sigma$.  

We proceed to discuss each case as stated in Theorem \ref{th2}. 

\subsection{Strong damping ($\nu >2\sqrt{kc}$)}
\label{SRCTC}
We rewrite (\ref{lin}) as the following form 
\begin{align}\label{coe}
\frac{d}{dt}
\left(
\begin{array}{c}
 r\\
 s-1/c
 \end{array}
\right)=  \left(\begin{array}{cc}
 -\nu & ck \\
 -1 & 0
  \end{array}
\right) \cdot \left(\begin{array}{c}
 r\\
 s-1/c
  \end{array}
\right).
\end{align}
The coefficient matrix on the right hand side has eigenvalues  
 $$
 \lambda_{\pm}=(-\nu\pm\sqrt{\nu^2-4kc})/2
 $$ 
with the corresponding eigenvectors 
$$
v_-= (-\lambda_-, 1)^\top, \quad v_+=(-\lambda_+, 1)^\top.
$$
Under the strong damping condition, the critical point $(0,1/c)$ is an asymptotically stable node. In order to determine $\Sigma$, we know that the boundary curve $(r(t), s(t))$ satisfy the following 
$$
r'=-\nu r +k(cs-1), \quad s'=-r, \quad t<\tau
$$
with $(r, s)=(0, 0)$ at a time $t=\tau$.  When the time parameter is eliminated, we obtain 
$$
\frac{dr}{ds} = \nu + \frac{k}{r}(1-cs), \quad r(0)=0. 
$$
Let such a trajectory be denoted by $r=Q_a(s)$, we have   
$$
\Sigma=\{(r, s), \quad s>0, r<Q_a(s)\},
$$
where $Q_a(s)$ is as defined in (\ref{qa}). We now show such set is well-defined by looking at the asymptotic behavior of $Q_a(s)$. Since both the eigenvalues are real and negative, then 
$$
\lim_{t\to -\infty} \frac{r(t)}{s(t)}=-\lambda_-, 
$$
the slope of the eigenvector $v_-$, for any trajectories. Hence 
 \[
 \lim_{s\rightarrow\infty}\frac{Q_a(s)}{s} = \frac{\nu + \sqrt{\nu^2 - 4kc}}{2}.
 \]
One can show that $r=-\lambda_- (s-1/c)$ is a trajectory, so the curve $r=Q_a(s)$ always remains below it. As a result,  $r=Q_a(s)$ also remains  below the line $\nu r +k(1-cs)=0$. 
Hence, we have  $\frac{dQ_a}{ds}>0$ for $s\in(0,\infty )$,  $\Sigma$ is thus well-defined. 
We can conclude that $s(t)>0\ \forall t>0$ if and only if $(r_0,s_0)\in \Sigma$. 

We also need to ascertain the behavior of $\frac{Q_a(s)}{s}$ as $s$ goes to zero to know the behavior of $d$ versus $\rho$ on the $\rho-d$ plane as $\rho \to \infty$.  First we know that
\[\lim_{s\rightarrow 0^+}\frac{Q_a(s)}{s} = \lim_{s\rightarrow 0^+}Q_a'(s) = \lim_{s\rightarrow 0^+} \nu + \frac{k}{Q_a(s)} = +\infty ,\]
which from (\ref{var}) implies that $d\rightarrow -\infty$ as $\rho\rightarrow\infty$.  Transforming the threshold curve back onto the $\rho -d$ plane through (\ref{var}), there is global solution if and only if
 \[(\rho_0 ,d_0)\in \{ (\rho ,d):d>-\rho Q_a(1/\rho),\, \rho\in (0,\infty ) \}.\]
\begin{remark}
\label{remnonlin1}
We could also evaluate $\lim_{s\rightarrow\infty}\frac{Q_a(s)}{s}$ using (\ref{nonlin}). Since $r=Q_a(s)$ is a trajectory, using (\ref{var}), the above limit is the value of $-d$ as $\rho\rightarrow 0$. Since $\rho=0$ is a solution to (\ref{nonlin}a), we thus have 
\[
d' = -(d^2 + \nu d + kc),
\]
which gives 
\[d' = -(d-\lambda_+)(d-\lambda_-).
\]
For this Ricatti equation, $d$ breaks down in finite time if and only if initial data $d_0 <\lambda_-$. 
Therefore, 
\[\lim_{s\rightarrow\infty}\frac{Q_a(s)}{s} = -\lambda_- = \frac{\nu + \sqrt{\nu^2 - 4kc}}{2}. \]
\end{remark} 

\subsection{Borderline damping ($\nu = 2\sqrt{kc}$)}
\label{BRCTC}
 Note that $(0,1/c)$ is an asymptotically stable improper node with eigenvalue $\lambda=-\nu/2$ and the corresponding eigenvector $v=[-\lambda\quad 1]^T$.   Similar to the strong damping case we can identify the invariant region
$$
 \Sigma=\{(r, s), \quad s>0, r<Q_b(s)\},
$$ 
where $Q_b$ is a monotone function, satisfying 
\[
\lim_{s\rightarrow\infty}\frac{Q_b(s)}{s}= \frac{\nu}{2},
\]
and it can be determined by the ODE (\ref{qb}). We also have 
\[\lim_{s\rightarrow 0^+}\frac{Q_b(s)}{s}  = +\infty.
\]
These enable us to 
conclude that  $s(t)>0\ \forall t>0$ if and only if $(r_0,s_0)\in \Sigma$. 
Transforming the invariant region back onto the $\rho$-$d$ plane through (\ref{var}), there is global solution if and only if
\[(\rho_0 ,d_0)\in \{ (\rho ,d):d>-\rho Q_b(1/\rho),\, \rho\in (0,\infty ) \}.\]
We point out that on the $\rho$-$d$ plane, the critical threshold curve starts at $(0,-\nu /2)$ and monotonically goes to negative infinity as $\rho$ goes to infinity.\\

\subsection{Weak damping ($\nu <2\sqrt{kc}$)}
\label{WRCTC} In such case, the coefficient matrix on the right hand side of (\ref{coe})  has eigenvalues  
 $$
 \lambda_{\pm}=-\frac{\nu}{2}  \pm  \frac{i}{2}\sqrt{4kc-\nu^2}.
 $$ 
Hence  $(0,1/c)$ is an asymptotically stable spiral point for system (\ref{lin}). Therefore, trajectories spiral into the critical point as time increases. Consequently, by vector field diagram, a trajectory beginning at the origin and proceeding backwards in time (into the first quadrant on $r$-$s$ plane) would spiral outwards and hit the $s=0$ line again in the second quadrant. This segment of the trajectory is the threshold curve on the $r$-$s$ plane. This curve partitions the upper half plane into two sections. The closed region formed by this curve and $s=0$ would then be an invariant region $\Sigma$ since any trajectory with initial data $(r_0,s_0)$ in this region would spiral inwards clockwise without touching the $s=0$ line. And any trajectory with initial data outside this region remains outside this region. 

We proceed to describe $\Sigma$ using two functions.
\begin{figure}[H]
\centering
\begin{tikzpicture}
  
  \draw [help lines, ->] (-4,0) --(4,0);
  \draw [help lines, ->] (0,0) -- (0,3.5);
  \node at (4.1,0) {r};
  \node at (0,3.6) {s};
  \draw[ thick] (-1,0) -- (3.5,3);
  \node at (3.7,3.2) {$-\nu r - k(1-cs)=0$};
  \node at (0.2,2.8) {$s^*$};
  \draw[thick] (0,0) to [out=0,in=270] (1,1) to [out=90,in=0] (0,2.7) to [out=180,in=60] (-3,0);
  \node at (-2,2.5) {$r=Q(s)$};
  \draw[thick,->] (2.2,0.4) -- (1.9,0.2);
  \draw[thick,->] (-0.2,-0.1) -- (-0.5,0.2);  
  \draw[thick,->] (-1.5,0.8) -- (-1.2,1.1); 
  \draw[thick,->] (-0.2,2) -- (0.2,2); 
  \draw[thick,->] (0.5,2) -- (0.8,1.7); 
  \draw[thick,->] (1,1.6) -- (1,1.1); 
  \draw[thick,->] (2.2,1.2) -- (1.9,1);  
   
\end{tikzpicture}
\caption{The curve along with vector field for (\ref{lin}).}
\label{fig2}
\end{figure}
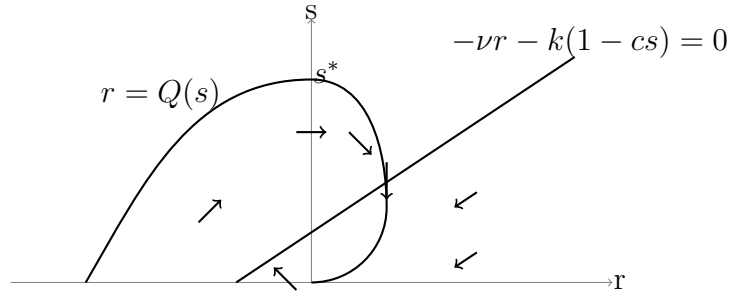
Because of the outward spiraling nature of the trajectory curve starting from $(0, 0)$, then there exists a unique  $s^*>1/c$ so that we have 
$(r, s)=(0, s^*)$ at some $t$.  The first segment of such curve can be defined via a continuous function, 
\[Q_1 :[0,s^* ]\longrightarrow \mathbb{R}^+\cup \{ 0\},\]
satisfying for $0<s<s^*$, 
\[\frac{dQ_1}{ds} = \nu + \frac{k}{Q_1}(1-cs), \quad Q_1(0)=0.\]
Define a continuous function $Q_2 : [0,s^* ]\longrightarrow\mathbb{R}^-\cup \{ 0\}$ as follows. 
\[
\frac{dQ_2}{d\tau} = -\nu + \frac{k}{Q_2(\tau)}(cs^* -1 -c\tau), \quad Q_2(0)=0.
\]
Then $r=Q_2(s^*-s)$ gives the left segment of the said trajectory curve in the $r-s$ plane. That is the invariant region can be defined by 
\[
\Sigma=\{ (r,s): Q_2 (s^*-s) < r < Q_1 (s), s\in (0,s^*) \}.
\]
In order to transform $\Sigma$ back to the $\rho$-$d$ plane, we evaluate the appropriate limits to ascertain the behavior of the threshold curve as $\rho \to \infty$ 
As done before, we have 
$$
\lim_{s\rightarrow 0^+}\frac{Q_1(s)}{s} = \lim_{s\rightarrow 0^+}\frac{dQ_1}{ds}(s) = +\infty.
$$
Note that $Q_2(s^*)<0$ due to the outward spiraling nature of the threshold trajectory,  hence  
\[
\lim_{s \rightarrow 0}\frac{Q_2(s^* -s)}{s}=-\infty.
\]
We can now transform back onto the $\rho -d$ plane through (\ref{var}) to conclude that  there is global solution if and only if
\[(d_0,\rho_0 )\in \{ (d,\rho): -\rho Q_1 (1/\rho) <d <-\rho Q_2(s^* - 1/\rho ),\ \rho\in (1/s^* ,\infty ) 
 \}.
\]
Note that the shape of the critical threshold curve is similar to that of a parabola opening towards positive $\rho$ axis and vertex at $(1/s^* ,0)$. This completes the proof to Theorem \ref{th2}.

\begin{remark} Using (\ref{wrineq}), we can find $s^*$ explicitly. Note that the left-most point of the threshold curve is $(\rho^*, d^*)=(1/s^*, 0)$ with $\rho^*<c$; for which $\beta =\pi$, setting $d_0=d^*=0$ in  (\ref{wrineq}) and using $\mu^2 = kc - \nu^2 /4$, we find that 
\[
\rho^*= \frac{c}{e^{\frac{\nu\pi}{2\mu}}+1}.\]
Hence, $s^* = \frac{1}{\rho^*}=\frac{e^{\frac{\nu\pi}{2\mu}}+1}{c}$.
\end{remark}

\section{Application to an aggregation system} In this section, we illustrate that our results can be applied to system (\ref{cc}), subject to $C^1$ initial data (\ref{ccid}). 

For initial data $(\rho_0, u_0)$ defined on the entire $\mathbb{R}$, we shall make the following assumptions concerning their behavior at far fields.  There exists $\delta >0$ such that 
\begin{align}\label{ru}
u_{0x}(x)  \in C_b^0(\mathbb{R}), \quad \langle x \rangle^{2+\delta}\rho_0(x) \in C_b^0(\mathbb{R}), 
\end{align}
where $\langle x \rangle:=\sqrt{1+x^2}$, and 
$C_b^0(\mathbb{R})$ denotes the set of bounded continuous functions on $\mathbb{R}$. Under (\ref{ru}) we have 
\begin{align}\label{m01}
\int_{\mathbb{R}}\rho_0(x)\, dx<\infty, \quad \int_{\mathbb{R}} \rho_0(x) |u_0(x)|\, dx<\infty,\quad \int_{\mathbb{R}}|x|\rho_0 (x)\, dx <\infty.
 \end{align}
 We state the local wellposedness  in the following.
\begin{theorem} \label{l2} ($\textbf{Local existence}$) If $\rho_0,\, u_0\in C^1(\mathbb{R})$, and (\ref{ru}) is satisfied,  
then there exists $T>0$,  depending upon the initial data so that (\ref{cc}), (\ref{ccid}) has a unique solution $(\rho,u)\in C^1(\mathbb{R}\times [0,T))$, that  
for $t\in [0, T)$ and $i=1, 2$ 
\begin{align}\label{rut}
 \rho(x,t)|u|^i (x,t)\to 0\ as\ |x|\to\infty \quad \text{and} \quad   \rho(x,t)\leq\frac{C}{|x|^{2+\delta}}.
\end{align}
Moreover,  if the maximum life span $T^* < \infty$, then
$$
\lim_{t \rightarrow T^*-} \partial_x u(t, x^*) =-\infty
$$
for some $x^* \in \mathbb{R}$.
\end{theorem}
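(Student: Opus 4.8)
\textbf{Proof proposal for Theorem \ref{l2}.}

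The plan is to reduce the nonlocal system (\ref{cc}) to a setting close enough to (\ref{EPMain}) that a characteristic/fixed-point argument applies, while simultaneously propagating the decay in (\ref{rut}). First I would compute the force term explicitly: since $W(x)=-|x|+|x|^2/2$, we have $\partial W(x) = -\mathrm{sgn}(x) + x$, so
\[
-\partial W \star \rho(x,t) = \int_{\mathbb R}\mathrm{sgn}(x-y)\rho(y,t)\,dy - \int_{\mathbb R}(x-y)\rho(y,t)\,dy = \int_{\mathbb R}\mathrm{sgn}(x-y)\rho(y,t)\,dy - x M_0 + M_1(t),
\]
where $M_0=\int\rho\,dy$ (conserved) and $M_1(t)=\int y\rho(y,t)\,dy$. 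The first integral is bounded by $M_0$ and the neutrality/mass bounds (\ref{m01}) make all these finite; differentiating in $x$ gives $-\partial_x(\partial W\star\rho) = 2\rho - M_0$, which is precisely the Poisson-type coupling $-\phi_{xx}=\rho-c$ with $k=2$, $c=M_0$, $\nu=1$. So along characteristics $x'(t)=u$, writing $d=u_x$, one gets the same ODE system (\ref{nonlin}) with these parameters, except $M_0$ plays the role of $c$ and is itself determined by the solution — but $M_0$ is constant in time provided $\rho u\to 0$ at infinity, which is part of what must be shown.

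The core of the proof is a Picard-type iteration on a short time interval. I would set up the iteration map on the space of velocity fields $u$ with $u_x\in C_b^0$, building the flow $X(t;\alpha)$ by solving $X'=u(X,t)$, transporting density by $\rho(X(t;\alpha),t)=\rho_0(\alpha)/\partial_\alpha X(t;\alpha)$, recomputing the nonlocal force (which is a bounded, Lipschitz functional of $\rho$ in a suitable weighted norm), and solving the velocity ODE back. The key estimates are: (i) $\partial_\alpha X$ stays bounded away from $0$ and $\infty$ on $[0,T)$ for $T$ small, giving $\rho\in C^1$; (ii) the weighted bound $\langle x\rangle^{2+\delta}\rho(x,t)\le C$ is propagated — here I would use that characteristics move a bounded distance in short time, $|X(t;\alpha)-\alpha|\le Ct$, so $\langle X(t;\alpha)\rangle \sim \langle\alpha\rangle$, combined with the Jacobian bound; (iii) from (ii) with $\delta>0$ one recovers $\int|x|\rho\,dx<\infty$ and $\rho u^i\to 0$ as $|x|\to\infty$ for $i=1,2$, using that $|u(x,t)|$ grows at most linearly in $x$ (since $\partial W\star\rho$ grows at most linearly and the $u$-equation is linear in $u$ along characteristics, giving $|u(x,t)|\le C(1+|x|)$). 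The contraction is then closed in the $C_b^0$ norm of $u_x$ plus a weighted norm of $\rho$, for $T$ depending only on the initial data through the constants in (\ref{ru}).

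For the continuation criterion, I would argue by contradiction exactly as in the classical case: if $T^*<\infty$ but $\partial_x u$ stays bounded below (hence, via the $d$-equation $d'+d^2+d = 2(\rho-M_0)$ and boundedness of $\rho$, also bounded above) on $[0,T^*)$, then $\partial_\alpha X=1/(\rho_0/\rho)$ stays bounded above and below, $\rho$ and its weighted norm stay controlled, $u$ and $u_x$ stay in $C_b^0$, and the local existence can be restarted past $T^*$, a contradiction. The main obstacle I anticipate is item (ii)–(iii): making the weighted decay estimate close in the iteration requires care because the nonlocal force $-x M_0 + M_1(t)$ grows linearly in $x$, so $u$ genuinely grows linearly, and one must check this linear growth does not destroy the $\langle x\rangle^{-(2+\delta)}$ decay of $\rho$ over a short time — this works because the Jacobian correction and the short-time bound on characteristic displacement are both multiplicative and mild, but it is the step where the hypothesis (\ref{ru}) (in particular the specific exponent $2+\delta$ and the boundedness of $u_{0x}$ rather than of $u_0$ itself) is actually used. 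The detailed verification is deferred to Appendix A.
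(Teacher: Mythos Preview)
Your overall strategy is viable but takes a genuinely different route from the paper, and there is one slip worth flagging. The paper does \emph{not} run a Picard iteration on the nonlocal force. Instead it observes that for this particular $W$ the field $E:=\partial W\star\rho$ satisfies a \emph{local} transport equation along characteristics, $E_t+uE_x=M_0u-M_1e^{-t}$ (with $M_1=\int\rho_0 u_0$; one first shows $\int\rho u=M_1e^{-t}$). Together with $u_t+uu_x+u=-E$ this is a \emph{closed linear} system for $(u,E)$ along each particle path, decoupled from $(\rho,d)$. Differentiating in the Lagrangian label $\alpha$, the quantity $a:=z_\alpha$ solves the constant-coefficient ODE $a''+a'+M_0a=0$ with uniformly bounded data $(u_{0x},\,-u_{0x}+2\rho_0-M_0)$, so $|z_\alpha|\le C$ immediately and $\partial_\alpha x=1+\int_0^t z_\alpha>\tfrac12$ for small $T$ with no iteration at all. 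The remaining work is to invert the flow and \emph{verify} that the constructed $(\rho,u)$ actually solves (\ref{cc}) --- in particular that $d=u_x$ and that $Q=\partial W\star\rho$ along characteristics --- which the paper does via two auxiliary ODE-uniqueness arguments. What this buys over your approach is that short-time existence becomes essentially an ODE theorem; what your fixed-point approach would buy is generality, since it does not rely on the algebraic accident that $E$ satisfies a closed transport equation and would extend to more general kernels.

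One concrete slip in your outline: the claim $|X(t;\alpha)-\alpha|\le Ct$ is false here, because (as you yourself note) $u$ grows linearly in $x$, so the displacement is of order $(1+|\alpha|)t$, not $t$. What you actually need, and what does hold via Gr\"onwall, is the weaker comparability $\langle X(t;\alpha)\rangle\sim\langle\alpha\rangle$ for $t\in[0,T]$; this is enough to propagate the weighted bound $\langle x\rangle^{2+\delta}\rho\in C_b^0$ in your step (ii). A minor arithmetic point: the identification with (\ref{EPMain}) has $c=M_0/2$, not $c=M_0$, since $2\rho-M_0=2(\rho-M_0/2)$.
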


In other words, we prove local existence and uniqueness for solutions in a more restricted function space. We would like to point out that the decaying assumptions at far fields make sense physically and are reasonable assumptions as we want the particle density, momentum, and energy to vanish at $\pm \infty$. 

Using \eqref{m01}, set 
\[
\int_\mathbb{R}\rho_0(x)\, dx =: M_0,\quad \int_\mathbb{R}\rho_0(x)u_0(x)\, dx =: M_1.
\]
For $C^1$ solutions satisfying (\ref{rut}) we first derive a local PDE in terms of $(u, E, d, \rho)$ with $E := \partial W\star\rho$ and $d=u_x$, and reformulate it into a closed ODE system.  
First, we integrate (\ref{cc}a) to get
\[\int_\mathbb{R}\rho(x,t)\, dx = M_0,\]
since $\rho u\to 0$ as $|x|\to\infty$. Hence we have  
\begin{align}\label{E}
E(x,t) & = \int_\mathbb{R} [-sgn(x-y)+x-y]\rho(y,t)\, dy \\ \notag 
& =  (x+1) M_0 - \int_\mathbb{R} y \rho(y,t)dy -2\int^x_{-\infty} \rho(y, t)dy.
\end{align}
Here we see that $E$ is well defined due to (\ref{rut}).  (\ref{cc}) can be used to obtain $(\rho u)_t + (\rho u^2)_x + \rho u = -E\rho$. Upon integrating,
\[\frac{d}{dt}\int_\mathbb{R} \rho u + \int_\mathbb{R} \rho u = 0. \]
Here, we used the decay of $\rho u^2$ and symmetry of $E$ so that $\int_\mathbb{R} E\rho dx=0$. 
Therefore,
\[
\int_\mathbb{R} \rho(y,t)u(y,t)\, dy = M_1e^{-t}.
\]
Differentiation of (\ref{E}) with respect to $t$ using (\ref{cc}a) yields 
\[E_t = 2\rho u- \int_\mathbb{R} \rho(y,t)u(y,t)\, dy= 
2\rho u-M_1e^{-t}.
\]
Also, we get $E_x =M_0 -2\rho$, so that 
$$
E_t +uE_x=M_0 u-M_1e^{-t}. 
$$
Together with the equation for $d=u_x$ we obtain an augmented system 
\begin{subequations}\label{closedPDE}
\begin{align} 
& \rho_t +u\rho_x + \rho d=0,\\
& u_t +uu_x + u = -E,\\
& d_t +u d_x + d^2 + d = -E_x=2\rho-M_0,\\
& E_t+uE_x = -M_1 e^{-t} + M_0 u.
\end{align}
\end{subequations}
From this system we derive the characteristic system, based on which we further construct the local-in-time solution. Finally, we show  such constructed solution indeed satisfies (\ref{rut}). Further details will be deferred to Appendix. 

Since both equations for $u$ and $E$ are linear, and decoupled from the equations for $\rho$ and $d$. It suffices to consider  
 the following system of equations \begin{subequations} to find the critical threshold:  
\label{nonlocnonlin}
\begin{equation}
\label{nonlocalnonlin1}
\rho' + \rho d=0,
\end{equation}
\begin{equation}
\label{nonlocalnonlin2}
d' + d^2 + d = 2\left(\rho - \frac{M_0}{2}\right),
\end{equation}
\end{subequations}
and 
	\[
	\{\}' = \frac{\partial}{\partial t} + u\frac{\partial}{\partial x},
	\]
	 denotes the differentiation along the particle path,
	$$
	\Gamma=\{(x, t)|\;  x'(t)=u(x(t), t), x(0)=\alpha \in \mathbb{R}\}.
	$$
This is a particular case of system  (\ref{nonlin}), with 
\[\nu = 1,\qquad k=2,\qquad c=\frac{M_0}{2}.\]
Consequently, we have the following theorems.
\begin{theorem}
\label{thappl1}
For the given 1D pressureless damped Euler system of equations (\ref{cc}) subject to initial data (\ref{ccid}), there is finite time breakdown iff $\exists x \in\mathbb{R}$ such that
\begin{enumerate}
\item
(Subcritical mass $M_0 <1/4$) 
\begin{align*}
& \max\{ u_0'(x),\ u_0' (x ) + \lambda_2(0.5M_0-\rho_0(x))\}<0 \;\text{and} \;  \\
& \left| \frac{0.5M_0\lambda_2 u_0'(x)+M_0-2\rho_0(x)}{2\rho_0(x)} \right|^{\lambda_1} \leq \left| \frac{0.5M_0\lambda_1 u_0'(x) + M_0-2\rho_0(x)}{2\rho_0(x)} \right|^{\lambda_2},
\end{align*}
where $\lambda_1=\frac{1-\sqrt{1-4M_0}}{M_0}$ and $\lambda_2=\frac{1+\sqrt{1-4M_0}}{M_0}$.

\item
(Critical mass  $M_0=1/4$) 
\begin{align*}
& \max\{ u_0'(x),  M_0u_0'(x) + 0.5M_0-\rho_0(x)\} <0 \; \text{and} \;  \\
& \ln\left(-\frac{ M_0u_0'(x) +  0.5M_0-\rho_0(x)  }{\rho_0(x)}\right) \geq \frac{M_0u_0'(x)}{M_0u_0'(x) + 0.5M_0-\rho_0(x)}.
\end{align*}

\item
(Supercritical mass  $M_0 >1/4$)
\[\left( u_0'(x) + \frac{0.5M_0-\rho_0(x)}{M_0}\right)^2\geq \left(M_0-\frac{1}{4}\right)\left[\frac{4\rho_0^2(x)}{M_0^2}\left( \frac{M_0e^{t^*}}{M_0-\frac{1}{4}} - 1 \right)+\frac{4\rho_0(x) -M_0}{M_0}\right],\]
where
\[t^* = \frac{1}{\sqrt{\left(M_0-\frac{1}{4}\right)}}\left[\beta + tan^{-1}\left( \frac{2 u_0'(x)\sqrt{\left(M_0-\frac{1}{4}\right)}}{ u_0'(x) + 2M_0-4\rho_0(x)}\right)\right], \]
\[\beta =\left\{ \begin{array}{lr}
0 &  u_0'(x)<min\{ 0,4(\rho_0(x) -0.5M_0)  \}, \\
\pi &  4(\rho_0(x) -0.5M_0)< u_0'(x),\\
2\pi &  0< u_0'(x) <4(\rho_0(x) -0.5M_0).
\end{array}\right.\]

\end{enumerate}
\end{theorem}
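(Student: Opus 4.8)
The plan is to obtain Theorem~\ref{thappl1} as a direct consequence of Theorem~\ref{th1}, once the dynamics of (\ref{cc}) along particle paths has been reduced to the model characteristic system (\ref{nonlin}). The first step is to invoke the local existence result, Theorem~\ref{l2}: it provides a unique $C^1$ solution on a maximal interval $[0,T^*)$, propagates the far-field decay (\ref{rut}) in time, and identifies $\partial_x u\to-\infty$ as the sole blow-up mechanism. Hence finite time breakdown of (\ref{cc}) is equivalent to $d=u_x$ becoming unbounded below along the characteristic emanating from some $\alpha\in\mathbb{R}$, and it suffices to decide this for each $\alpha$ separately.

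The second step — essentially carried out in the discussion preceding the statement — is to close the characteristic dynamics. Using $\rho u\to 0$ and $\rho u^2\to 0$ at infinity (both consequences of (\ref{rut})) together with the oddness of $W'$, one obtains the conservation identities $\int_\mathbb{R}\rho\,dx=M_0$ and $\int_\mathbb{R}\rho u\,dx=M_1e^{-t}$; these turn the nonlocal force $E=\partial W\star\rho$ into a quantity satisfying $E_x=M_0-2\rho$ and $E_t+uE_x=M_0u-M_1e^{-t}$, yielding the augmented local system (\ref{closedPDE}). Since the $(u,E)$ block is linear and decouples from $(\rho,d)$, the relevant characteristic ODEs are exactly (\ref{nonlocnonlin}), which is (\ref{nonlin}) with $\nu=1$, $k=2$, $c=M_0/2$. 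The breakdown criterion for (\ref{nonlin}) established in Section~\ref{AES} — proved purely from the ODE system (\ref{lin})/(\ref{IVP}) — therefore applies verbatim to (\ref{nonlocnonlin}).

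The third step is bookkeeping. With $(\nu,k,c)=(1,2,M_0/2)$ one has $\nu^2-4kc=1-4M_0$, $2c=M_0$, $kc=M_0$, and $\mu=\sqrt{kc-\nu^2/4}=\sqrt{M_0-\tfrac14}$, so the three damping regimes $\nu>2\sqrt{kc}$, $\nu=2\sqrt{kc}$, $\nu<2\sqrt{kc}$ become respectively $M_0<\tfrac14$ (subcritical), $M_0=\tfrac14$ (critical), $M_0>\tfrac14$ (supercritical). Substituting these parameters into the three criteria of Theorem~\ref{th1} and clearing the positive factors $M_0$, $\rho_0$, or $c$ where convenient reproduces the eigenvalues $\lambda_{1,2}=(1\mp\sqrt{1-4M_0})/M_0$, the expression for $t^*$, the trichotomy for $\beta$, and the displayed inequalities: e.g.\ in the subcritical case $k(c-\rho_0)=M_0-2\rho_0$, $k\rho_0=2\rho_0$ and $c\lambda_i u_0'=0.5M_0\lambda_i u_0'$ give the power inequality, while the critical and supercritical cases are the analogous substitutions into (\ref{brineq})-type and (\ref{wrineq})-type conditions.

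I do not expect a genuine obstacle here, only one point requiring care: matching sign and branch conventions. Theorem~\ref{th1} records the breakdown region using absolute values and a case split on $\beta$ determined by the signs of $u_0'$ and of $2k(\rho_0-c)$, and one must verify that after the substitution $2k(\rho_0-c)=4(\rho_0-M_0/2)$ the branch choices and the accompanying strict inequalities (such as $\max\{u_0',u_0'+\lambda_2(0.5M_0-\rho_0)\}<0$) are exactly those written in Theorem~\ref{thappl1}. The remaining content — that (\ref{rut}) is precisely what licenses passing from the nonlocal problem on $\mathbb{R}$ to the local model — is already supplied by Theorem~\ref{l2} together with the computation of $E$, so the argument reduces to this verification.
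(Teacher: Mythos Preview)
Your proposal is correct and mirrors the paper's own approach exactly: the paper derives the closed characteristic system (\ref{nonlocnonlin}) via Theorem~\ref{l2} and the computation of $E$, observes it is (\ref{nonlin}) with $\nu=1$, $k=2$, $c=M_0/2$, and then states Theorem~\ref{thappl1} as an immediate corollary of Theorem~\ref{th1} via parameter substitution. Your third step spells out the bookkeeping in more detail than the paper does, but the structure is identical.
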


\begin{theorem}
\label{thappl2}Consider the given 1D pressureless damped Euler system of equations (\ref{cc}) subject to initial data (\ref{ccid}). There exists a unique global solution $\rho,u\in C^1(\mathbb{R}\times (0,\infty))$ iff $\forall x\in\mathbb{R}$,
\begin{enumerate}
\item
(Subcritical mass $ M_0<1/4$)
\[
(\rho_0(x ) ,u_0'(x ) )\in \{ (\rho,d): d >-\rho R_a(1/\rho ),\, \rho>0  \},
\]
where $R_a:[0,\infty)\longrightarrow[0,\infty)$ is a continuous function satisfying
\[\frac{dR_a}{ds} = 1 + \frac{1}{R_a}(2-M_0 s), \; R_a(0)=0.\]

\item
(Critical mass $M_0=1/4$) 
\[(\rho_0(x ) ,u_0'(x) )\in \{ (\rho,d): d >-\rho R_b(1/\rho),\, \rho>0  \},\]
where $R_b:[0,\infty)\longrightarrow[0,\infty)$ is a continuous function satisfying
\[\frac{dR_b}{ds} = 1 + \frac{1}{R_b}(2-s/4), \; R_b(0)=0.\]
\item
(Supercritical mass $M_0>1/4$) 
\[
(\rho_0(x ) ,u_0'(x ) )\in \{ (\rho,d): -\rho R_1 (1/\rho) <d <-\rho R_2(\gamma - 1/\rho ),\, \rho\in (1/\gamma,\infty)\}, 
\]
where 
$$
\gamma=\frac{2}{M_0} \left( 1+
e^{\frac{\pi}{\sqrt{4M_0-1}}}
\right),
$$
and $R_1:[0,\gamma]\longrightarrow\mathbb{R}^+\cup \{ 0\}$ is a continuous function satisfying 
\[\frac{dR_1}{ds} = 1 + \frac{1}{R_1}(2-M_0 s),\; R_1(0)=0,\]
and  $R_2: [0,\gamma]\longrightarrow\mathbb{R}^-\cup\{ 0\}$ is another continuous function satisfying 
\[
\frac{dR_2}{ds} = -1 + \frac{1}{R_2}(M_0(\gamma-s)-2),\quad R_2(0)=0.
\]
\end{enumerate}
\end{theorem}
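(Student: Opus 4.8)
\emph{Proof proposal.} The plan is to derive Theorem \ref{thappl2} as a direct corollary of Theorem \ref{th2} once the local wellposedness of Theorem \ref{l2} is available. By Theorem \ref{l2}, for $C^1$ data obeying (\ref{ru}) there is a unique local $C^1$ solution with the far-field decay (\ref{rut}), and any finite-time breakdown must occur through $\partial_x u \to -\infty$ at some point. Moreover the derivation leading to (\ref{closedPDE}) shows that, as long as (\ref{rut}) holds, the nonlocal force satisfies the pointwise identity $E_x = 2\rho - M_0$; consequently along each characteristic $\Gamma$ the pair $(\rho,d)$ with $d=u_x$ solves the autonomous system (\ref{nonlocnonlin}), which is exactly (\ref{nonlin}) with $\nu=1$, $k=2$, $c=M_0/2$. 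Hence the transformation (\ref{var}) linearizes it to (\ref{lin}) with these parameters, and the entire phase-plane analysis of Section \ref{CTC} transfers verbatim.

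Next I would match the three damping regimes to the three mass regimes. Since $\nu^2-4kc = 1-4M_0$, the strong/borderline/weak damping conditions $\nu \gtreqless 2\sqrt{kc}$ become $M_0 \lessgtr 1/4$, i.e.\ subcritical, critical, and supercritical mass. In each case Theorem \ref{th2} provides the invariant region $\Sigma$ in the $r$–$s$ plane bounded by the trajectory through the origin run backward in time, and transporting $\partial\Sigma$ to the $\rho$–$d$ plane via (\ref{var}) yields the threshold curve. Substituting $\nu=1$, $k=2$, $c=M_0/2$ into (\ref{qa}), (\ref{qb}) and the two ODEs defining $Q_1,Q_2$ gives precisely the ODEs for $R_a,R_b,R_1,R_2$ in the statement (e.g.\ $\tfrac{k}{Q}(1-cs)=\tfrac{2}{Q}\bigl(1-\tfrac{M_0}{2}s\bigr)=\tfrac{1}{Q}(2-M_0 s)$), and the monotonicity, positivity and limit properties established in Sections \ref{SRCTC}–\ref{WRCTC} that make $\Sigma$ well-defined, together with the endpoint behavior $d\to-\infty$ as $\rho\to\infty$, carry over unchanged. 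For the supercritical case the constant $\gamma$ is just the explicit value of $s^*$ from the Remark in Section \ref{WRCTC}: with $\mu=\sqrt{kc-\nu^2/4}=\tfrac12\sqrt{4M_0-1}$ one gets $\tfrac{\nu\pi}{2\mu}=\tfrac{\pi}{\sqrt{4M_0-1}}$ and $1/c=2/M_0$, so $s^*=\tfrac{1}{c}\bigl(e^{\nu\pi/(2\mu)}+1\bigr)=\tfrac{2}{M_0}\bigl(1+e^{\pi/\sqrt{4M_0-1}}\bigr)=\gamma$.

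Finally I would close the loop between the ODE analysis and the PDE. If $(\rho_0(x),u_0'(x))$ lies in the stated region for every $x$, then $s=1/\rho$ stays positive along every characteristic, so $\rho$ stays locally bounded and $d=s'/s$ stays bounded below on every compact time interval; since $u$ and $E$ satisfy the linear transport equations (\ref{closedPDE}b,d) with $u$ and $E_x=2\rho-M_0$ controlled, and the far-field bounds (\ref{rut}) persist so that $E$ remains well defined, $\partial_x u$ cannot blow up and the local solution of Theorem \ref{l2} extends to all $t>0$. Conversely, if the condition fails at some $x$, the explicit time $t^*$ produced by the analysis is finite, $s$ vanishes there, and $d=s'/s\to-\infty$, i.e.\ $\partial_x u\to-\infty$ in finite time, which by Theorem \ref{l2} forces $T^*<\infty$.

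The main obstacle is the last step on the whole line: one must propagate the far-field decay so that the identity $E_x = 2\rho-M_0$ (hence the clean reduction to (\ref{nonlocnonlin})) remains valid for all time, and so that boundedness of $d$ along characteristics genuinely rules out breakdown of the nonlocal system. This is exactly where the weighted assumption $\langle x\rangle^{2+\delta}\rho_0\in C_b^0(\mathbb{R})$ in (\ref{ru}) and the continuation argument behind Theorem \ref{l2} are needed, and it is why the compactly supported local theory of \cite{CCZ} does not suffice here.
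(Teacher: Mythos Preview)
Your proposal is correct and takes essentially the same approach as the paper: the paper simply observes that (\ref{nonlocnonlin}) is (\ref{nonlin}) with $\nu=1$, $k=2$, $c=M_0/2$, invokes Theorem \ref{th2}, and reads off the three mass regimes, with the local existence gap on $\mathbb{R}$ handled by Theorem \ref{l2}. Your write-up is in fact more explicit than the paper's---you spell out the substitution $\nu^2-4kc=1-4M_0$, verify the ODEs for $R_a,R_b,R_1,R_2$, and compute $\gamma=s^*$ from the Remark in Section \ref{WRCTC}---but the route is identical.
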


\appendix

\section{Proof of Theorem \ref{l2}} 
Denote  $' = \partial/\partial t + u\partial/\partial x$ as the derivative along the particle path,
\begin{equation}\label{flowpath}
\Gamma = \left\{ (x,t):\dot{x} = u(x,t),\ x(0)=\alpha,\, \alpha\in\mathbb{R}\right\},
\end{equation}
and define 
\begin{align*}
& f(\alpha,t):=\rho(x(\alpha,t),t),\\
& Q(\alpha,t):=E(x(\alpha,t),t),\\
& d(\alpha,t):=u_x(x(\alpha,t),t),\\
& z(\alpha,t):=u(x(\alpha,t),t), 
\end{align*}
so to obtain the following closed ODE system 
\begin{subequations}\label{closed}
\begin{align} 
& f' + fd=0,\\
& z' + z = -Q,\\
& d' + d^2 + d = 2f-M_0,\\
& Q' = -M_1 e^{-t} + M_0 z,
\end{align}
\end{subequations}
subject to initial data 
\begin{subequations}\label{closedid}
\begin{align} 
& f(\alpha,0) = \rho_0(\alpha),\\
& z(\alpha,0) = u_0(\alpha),\\
& d(\alpha,0) = u_{0x}(\alpha),\\
& Q(\alpha,0) = E_0(\alpha),
\end{align}
\end{subequations}
where $E_0(\alpha)$ is defined by 
\begin{align} \label{e0}
E_0(\alpha) = \int_\mathbb{R}[-sgn(\alpha - \beta)+(\alpha-\beta)]\rho_0(\beta)\, d\beta 
\end{align}
which is well defined since $|E_0(\alpha)-M_0\alpha| \leq M_0 +\int_{\mathbb{R}} |x|\rho_0(x)dx$.  

 This ODE problem, for each fixed $\alpha_0\in\mathbb{R}$, 
 admits a unique local $C^1$ solution $(f,z,d,Q)(\alpha, t)$ in a neighborhood of $(\alpha_0,0)$.
 Our aim is to find a $T>0$, such that $ f(\alpha,t),z(\alpha,t) $ are in $C^1(\mathbb{R}\times [0,T])$. 
 
 We do this by ensuring that the deformation of the path has a strictly positive uniform lower bound.  In the neighborhood of any $(\alpha_0,0)$, using (\ref{closed}b), (\ref{closed}d), (\ref{closedid}b) and (\ref{closedid}d), we see that $a:=z_\alpha(\alpha,t)$ solves 
\[a'' + a' + M_0a = 0,\]
\[a(0) = d_0(\alpha),\quad a'(0) = -d_0(\alpha)+2\rho_0(\alpha)-M_0.
\]
Since $d_0=u_{0x},\rho_0$ are bounded uniformly in terms of $\alpha$, we find that for any $ t>0 $, $|a(\alpha,t)|\leq C$ uniformly in $\alpha$. Hence, choosing $T<1/(2C)$, we ensure 
\[
\frac{\partial x}{\partial\alpha} = 1 + \int_0^t z_\alpha (\alpha,s)\, ds>\frac{1}{2}\quad \forall\alpha\in\mathbb{R},\ t\in [0,T].
\]

Eventually, by the inverse function theorem, for each $x\in \mathbb{R}$ and $t\in (0, T]$,  we can  uniquely solve the equation 
$$
x=x(\alpha, t), \quad \alpha=\alpha(x, t),
$$
where the mapping $x \to \alpha, t$ is $C^2$. Finally let us define 
\begin{align*}
& \rho(x, t):=f(\alpha(x, t),t),\\
& E(x,t)=Q(\alpha(x, t),t),\\
& u(x,t)=z(\alpha(x, t),t),\\
&p(x, t)=d((\alpha(x, t),t)
\end{align*}
for each $x\in \mathbb{R}$ and $t\in (0, T]$. It remains to show $(\rho, u)$ is indeed a solution to \eqref{cc}. One can very that 
$
f'=\rho_t +u \rho_x,
$
so that 
$$
\rho_t +u \rho_x+ \rho p=0.
$$
We still need to show $p=u_x$, that is to show $d=z_\alpha \cdot \frac{\partial \alpha}{\partial x}$ along the particle path $\Gamma$.  In view of this, fix $\alpha$ and set
\[\Theta(t) := z_\alpha - \frac{\partial x}{\partial \alpha}d.\]
Upon differentiating,
\begin{align*}
\Theta' & = z_\alpha' - z_\alpha d-\frac{\partial x}{\partial\alpha}d' \\
& = -Q_\alpha - (d+1)\left( z_\alpha - d\frac{\partial x}{\partial\alpha} \right) - \frac{\partial x}{\partial\alpha}(2f-M_0)\\
& = -Q_\alpha - \Theta (d+1) + \frac{\partial x}{\partial\alpha}(M_0-2f).
\end{align*}
Here, we used (\ref{closed}b), (\ref{closed}c) and \eqref{flowpath}.  Differentiating once again and using the expression for $\Theta$ along with (\ref{closed}d), (\ref{closed}a), (\ref{closed}c) and \eqref{flowpath}, 
\begin{align*}
\Theta'' &= -M_0z_\alpha  -\Theta' (d+1) -d' \Theta +z_\alpha (M_0-2f) +\frac{\partial x}{\partial\alpha}(-2f')\\
& = -\Theta' (d+1) + (d^2+d-2f+M_0)\Theta-2f\left(  z_\alpha - d\frac{\partial x}{\partial\alpha} \right) \\
& =- (d+1) \Theta' - (4f-M_0-d^2-d)\Theta. 
\end{align*}
Note that $\Theta(0)=0$ and $\Theta'(0)=0$, therefore, $\Theta\equiv 0$. So, $d=u_x$. 
In a similar manner we have 
$$
u_t +uu_x +u =-E(x, t). 
$$
We need to show $E(x, t)=\partial W* \rho$. In order to achieve this, it suffices to prove 
$$
q(\alpha, t)\equiv  0,
$$
where 
\[
q(t) 
= Q(\alpha,t) - \int_\mathbb{R}[-sgn(x(\alpha,t)-y(\beta,t))+(x(\alpha,t)-y(\beta,t)]\rho_0(\beta)\, d\beta.
\]
Observe that $q(0)=0$. Since the characteristics don't cross, we have  
$$
{\rm sgn}(x(\alpha,t)-x(\beta,t))={\rm sgn}(\alpha-\beta).
$$ 
This observation along with the characteristic flow equation gives,
\begin{align*}
q' & = Q'-\int_\mathbb{R}(z(\alpha,t)-z(\beta,t))\rho_0(\beta)\, d\beta \\
    & =M_0 z -M_1e^{-t} -z \int_\mathbb{R}\rho_0(\beta)\, d\beta +\int_\mathbb{R} z(\beta,t) \rho_0(\beta)\, d\beta \\
    & =\int_\mathbb{R} z(\beta,t) \rho_0(\beta)\, d\beta -M_1e^{-t}.
\end{align*}
We claim the right hand side is zero.  Hence $q\equiv 0$, we have arrived at equation (\ref{cc}b).

To prove the above claim,  we first show  $\int_\mathbb{R} Q(\alpha,t)\rho_0(\alpha)\, d\alpha=0$ which will be essential later. 
To see this, set
\[
a(t) := \int_\mathbb{R} Q(\alpha,t)\rho_0(\alpha)\, d\alpha.\]
One can check that $a(0)=0$. From (\ref{closed}d),
\[a'(t) = M_0\left( \int_\mathbb{R}z\rho_0\, d\alpha - M_1e^{-t} \right). 
\]
Observe that $a'(0)=0$. Differentiating once again, using the above expression for derivative and using (\ref{closed}b),
\[a'' + a' + M_0 a=0. \]
Consequently,
\[\int_\mathbb{R}Q(\alpha,t)\rho_0(\alpha)\, d\alpha = 0.\]
Multiplying (\ref{closed}b) by $\rho_0$, integrating and using the above result, we get
\begin{equation*}
\int_\mathbb{R} z(\alpha,t)\rho_0(\alpha)\, d\alpha = M_1 e^{-t}.
\end{equation*}
This proves the claim.  Finally, we show that the solution obtained for \eqref{cc} with the above construction indeed satisfies the designated boundary conditions. Since $f(\alpha,t)=\rho_0(\alpha)/(\partial x /\partial\alpha)$, we have $\rho\to 0$ as $|x|\to\infty$ with the same rate as $\rho_0$ since $\partial x/\partial \alpha$ has a strictly positive lower bound $\forall t\in[0,T]$. And since (\ref{closed}b) and (\ref{closed}d) form a closed linear system, we can explicitly solve for $z(\alpha,t)$ via 
\[z'' + z' + M_0 z = M_1e^{-t},\]
\[
z(0) = u_0(\alpha),\quad z'(0) = -u_0(\alpha) -E_0(\alpha).
\]
Since $u_{0x}$ is bounded and observing the $\alpha M_0$ term in $E_0(\alpha)$, both $z(0)$ and $z'(0)$ may grow at most linearly in $\alpha$, so does $z$. 
Hence, by hypothesis, 
$$
\lim_{|x|\to\infty}\rho u^2 =\lim_{|\alpha|\to\infty} (\partial\alpha/\partial x) \rho_0 z^2= 0.
$$ 
Finally, we show that for a fixed $\alpha \in\mathbb{R}$, for a finite time breakdown to occur, we must have $\lim_{t\to T^{*-}}d(t)=-\infty$. Otherwise, we would have  $\lim_{t\to T^{*-}}d(t)=\infty$.  Then  $\exists\epsilon >0$ such that for $t\in I:= \{ t: T^*-\epsilon<t<T^*\}$,
\[f' = -fd <0.\]
Therefore, there exists a constant $D>0$ such that $2f-M_0 <D$. Hence, from (\ref{closed}c),  $d'<D$;  Integration from some $t_0\in I$ gives  
\[
d(T^* ) < d(t_0)+D(T^*-t),
\]
which is a contradiction.  This completes the proof to Theorem \ref{l2}.

\section{Uniqueness of critical threshold curves}
\begin{lemma}
$Q_a$, $Q_b$, $Q_1$ and $Q_2$ in Theorem \ref{th2} are uniquely defined.
\end{lemma}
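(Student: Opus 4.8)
The plan is to treat each ODE initial value problem as a singular first-order equation of the form $Q' = F(s,Q)$ with $Q(0)=0$, where the right-hand side blows up as $Q\to 0^+$, and to extract uniqueness from the monotone, autonomous-at-the-origin structure near $s=0$ together with standard ODE theory away from $s=0$. I will present the argument for $Q_a$ in detail and indicate the identical modifications for $Q_b$, $Q_1$, $Q_2$.

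First I would establish existence and local behavior at the singular point. For $Q_a$ we have $\frac{dQ_a}{ds}=\nu+\frac{k}{Q_a}(1-cs)$; near $s=0$ the equation is well approximated by $Q Q' \approx k$, which suggests $Q_a(s)\sim\sqrt{2ks}$ as $s\to 0^+$. To make this rigorous I would set $P=Q_a^2$, so that $P' = 2Q_aQ_a' = 2\nu Q_a + 2k(1-cs)$, i.e. $P' = 2\nu\sqrt{P} + 2k(1-cs)$ with $P(0)=0$. This is still singular (the square root is not Lipschitz at $P=0$), but now one can argue directly: any solution satisfies $P(s) = 2\nu\int_0^s\sqrt{P(\sigma)}\,d\sigma + 2k s - kcs^2$, and since $2ks - kcs^2 \ge ks$ for $s$ small while the first term is nonnegative, we get $P(s)\ge ks$ near $0$, hence $\sqrt{P(s)}\ge\sqrt{ks}$, which in turn forces the comparison $P(s) = 2ks - kcs^2 + O(s^{3/2})$. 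From this lower bound $P(s)\gtrsim s$, the map $P\mapsto\sqrt P$ becomes Lipschitz on the relevant range of $P$ values for each fixed small $s>0$, and a Gronwall-type estimate on the integral equation on $[\varepsilon, s]$ combined with letting $\varepsilon\to 0^+$ using the established asymptotics yields uniqueness on a right-neighborhood of $0$.

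Next I would propagate uniqueness to all of $[0,\infty)$ for $Q_a$ and $Q_b$ (respectively to $[0,s^*]$ for $Q_1$, and $[0,s^*]$ for $Q_2$): once $s>0$ and $Q_a(s)>0$, the right-hand side $\nu+\frac{k}{Q}(1-cs)$ is $C^1$ in $(s,Q)$ on the open set $\{Q>0\}$, so the classical Picard--Lindel\"of theorem gives local uniqueness, and since the earlier analysis in Section~\ref{CTC} shows $Q_a>0$ on $(0,\infty)$ (via the trajectory $r=-\lambda_-(s-1/c)$ staying above it and $\frac{dQ_a}{ds}>0$), uniqueness extends by continuation along the whole interval. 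For $Q_2$ one uses instead that $\frac{dQ_2}{ds}=-\nu+\frac{k}{Q_2}(c(s^*-s)-1)$ with $Q_2<0$ on $(0,s^*)$, so $1/Q_2$ is smooth there; the singular point is again only $s=0$, handled by the same $P=Q_2^2$ substitution (with the sign of the square root chosen negative). The well-definedness of $s^*$ — needed for $Q_1,Q_2$ — is the unique positive value at which the outward-spiraling threshold trajectory of the linear system \eqref{lin} meets $s=0$ in the second quadrant, and this was already identified in Section~\ref{WRCTC} (with the explicit value $s^* = (e^{\nu\pi/(2\mu)}+1)/c$), so I would simply invoke it.

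The main obstacle is the singular endpoint $s=0$: because $F(s,Q)\to+\infty$ as $Q\to 0^+$, the standard Lipschitz-based uniqueness theorems do not apply directly there, and two distinct solution branches could a priori emanate from the origin. The substitution $P=Q^2$ regularizes the problem just enough — it converts the $1/Q$ singularity into a $\sqrt{P}$ nonlinearity — but $\sqrt{\cdot}$ is itself non-Lipschitz at $0$, so the real work is the sharp lower bound $P(s)\ge ks$ (equivalently $Q_a(s)\ge\sqrt{ks}$) for small $s$, which is what rules out the pathological extra branch and restores a Lipschitz estimate. Everything after that is routine ODE continuation, so I would spend essentially all of the proof on this local-at-the-origin analysis and then state that the remaining cases follow \emph{mutatis mutandis}.
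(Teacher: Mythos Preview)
Your argument is correct but takes a genuinely different route from the paper. The paper handles the singularity at $s=0$ by swapping the roles of independent and dependent variable: writing $s=S(r)$, the equation becomes
\[
\frac{dS}{dr}=\frac{r}{\nu r+k(1-cS)},\qquad S(0)=0,
\]
whose right-hand side is continuous and locally Lipschitz in $S$ near the origin (the denominator equals $k>0$ there). Picard--Lindel\"of then gives a unique local solution $S$, necessarily strictly increasing; any solution $Q$ of \eqref{qa} is strictly increasing near $0$ as well, so $Q^{-1}$ exists and must coincide with $S$, forcing uniqueness of $Q$. Away from $s=0$ both approaches proceed identically via standard continuation.

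Your method---squaring to $P=Q^2$, establishing the lower bound $P(s)\ge ks$, and using it to restore a Lipschitz/Gronwall estimate for the $\sqrt{P}$ nonlinearity---is more hands-on but entirely valid: the key point is that $|\sqrt{P_1}-\sqrt{P_2}|\le |P_1-P_2|/(2\sqrt{k\sigma})$ gives an integrable singular kernel $\sigma^{-1/2}$, so a singular Gronwall inequality closes. The trade-off is that the paper's inversion trick is one line and avoids any quantitative estimate, exploiting directly that the trajectory through the origin is \emph{transversal} to the $r$-axis (finite slope in the $(r,s)$-plane, infinite slope in the $(s,r)$-plane). Your approach, while longer, has the advantage of being self-contained and not relying on invertibility; it would also adapt more readily to situations where the inverse equation is not obviously regular.
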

\begin{proof}
We will prove the existence and uniqueness for $Q_a$ only, and similar analysis can be carried out for $Q_b, Q_1, Q_2$. 
Consider an auxiliary problem of the form 
\[
\frac{d S}{dr} = \frac{r}{\nu r+k(1-c S )},\quad S(0)=0,  r>0.
\]
One can check the right hand side function is continuous and locally Lipschitz in $S$ around the origin.  Hence, a unique, strictly increasing solution $S$ exists on $[0,\delta]$ for some small $\delta>0$. We can show that $S^{-1}$ satisfies (\ref{qa}). Now, suppose $Q$ and $\widetilde{Q}$ are two solutions to (\ref{qa}). Note from (\ref{qa}) that any solution has to be strictly increasing in a neighbourhood for $s\geq 0$. Hence, $Q$ and $\widetilde{Q}$ have inverses, $Q^{-1}$ and $\widetilde{Q}^{-1}$. But the inverse being unique, we have $Q^{-1}=\widetilde{Q}^{-1}=S$. Hence, $Q_a$ is uniquely defined. 
\end{proof}


\section*{Acknowledgement}
This work was partially supported by the National Science Foundation under Grant DMS1812666 and by NSF Grant RNMS (Ki-Net) 1107291.

\bibliographystyle{abbrv}

\end{document}